\newcommand\lH[1]{\left(#1\right)_H}
\newcommand{\h}{{\mbox{\tiny $H$}}}
\theoremstyle{plain}
\newtheorem{theorem}{Theorem}[section]
\newtheorem{lemma}[theorem]{Lemma}
\newtheorem{corollary}[theorem]{Corollary}
\theoremstyle{definition}
\newtheorem{definition}[theorem]{Definition}
\newcommand{\bd}{{\ast\ast}}
\newtheorem{remark}[theorem]{Remark}
\newtheorem{remarks}[theorem]{Remarks}
\def\<#1>{\langle\, #1\,\rangle}
\newcommand\norm[1]{\Vert #1 \Vert}
\newcommand{\mA}{\mathscr{A}}
\newcommand{\g}{{\mbox{\tiny $G$}}}
\newcommand{\gh}{{\mbox{\tiny $G/H$}}}
\newcommand\restr[2]{{% we make the whole thing an ordinary symbol
  \left.\kern-\nulldelimiterspace % automatically resize the bar with \right
  #1 % the function
  \vphantom{\big|} % pretend it's a little taller at normal size
  \right|_{#2} % this is the delimiter
  }}
\newcommand{\R}{\mathbb{R}}
\newcommand{\F}{\mathbb{F}}
\newcommand{\C}{\mathbb{C}}
\newcommand{\N}{\mathbb{N}}
\newcommand{\mB}{\mathscr{B}}
\newcommand{\mW}{\mathscr{WAP}}
\DeclareMathOperator{\supp}{supp}
\newcommand{\W}{\mathscr{WAP}}
\numberwithin{equation}{section}
\newcommand{\cnaught}{{C_0(G)}}
\renewcommand{\emptyset}{\varnothing}
\font\seis=cmr6
\def\wap{{\seis{\mathscr{WAP}}}}
\begin{document}
\newcommand{\I}{\mathbb{I}}
%\newcommand{\R}{\mathbb{R}}

%\title[WAP functionals in duals of irregular algebras]{On the size of weakly almost periodic functionals in %duals of irregular algebras}

%\nocite{*}

\title[enArity of Banach algebras]{On the extreme non-Arens regularity of Banach algebras}%the size of weakly almost periodic functionals in duals of irregular algebras}
\author[Filali and Galindo]{M. Filali \and  J. Galindo}
\thanks{ Research of  the second named  author  supported by  Ministerio
de Economía y Competitividad (Spain) through project MTM2016-77143-P (AEI/FEDER, UE)}
    \keywords{Arens product, Arens-regular algebra, Banach algebra,  $\ell^1$-base, extremely non-Arens regular, Fig\`a-Talamanca Herz algebra,  Fourier algebra,   Lebesgue-Fourier algebra,   Segal algebra, 1-Segal Fourier,  triangle}

\address{\noindent Mahmoud Filali,
Department of Mathematical Sciences\\University of Oulu\\Oulu,
Finland. \hfill\break \noindent E-mail: {\tt mfilali@cc.oulu.fi}}
\address{\noindent Jorge Galindo, Instituto Universitario de Matem\'aticas y
Aplicaciones (IMAC)\\ Universidad Jaume I, E-12071, Cas\-tell\'on,
Spain. \hfill\break \noindent E-mail: {\tt jgalindo@mat.uji.es}}

\subjclass[2010]{Primary 22D15; Secondary 43A46, 43A15, 43A60, 54H11}

\date{\today}

\begin{abstract}
As is well-know, on an Arens regular Banach algebra  all continuous functionals are weakly almost periodic. In this paper we
show that
$\ell^1$-bases  which approximate upper and lower triangles   of products  of elements in the  algebra produce
large sets of functionals that are not weakly almost periodic.
This leads to  criteria for extreme non-Arens regularity of Banach algebras in the sense of Granirer.
We  find in particular that bounded approximate identities (bai's) and bounded nets converging to invariance (TI-nets) both fall into  this approach, suggesting that this is indeed  the main tool behind most  known constructions of non-Arens regular algebras.

These criteria can be applied to the main algebras in harmonic
analysis such as the group algebra, the measure algebra, the semigroup algebra (with certain weights) and the Fourier
algebra.
In this paper, we apply our criteria to the Lebesgue-Fourier algebra,  the 1-Segal Fourier     algebra  and the Fig\`a-Talamanca Herz algebra.
  \end{abstract}
\maketitle

\section{Introduction}
In \cite{arens}, Richard Arens extended the product of a Banach algebra $\mA$ to its second dual $\mA^{**}$ in two  different, but completely symmetric, ways, both making $\mA^\bd$ into a Banach algebra. The way these products are defined makes them
 one-sided $\sigma(\mA^{**}, \mA^*)$-continuous, but   each of them on a different side.
 %Whether these two products are the same has been the focus of attention ever since.
As it turned out, on some algebras these extensions  yield the same product but on others two genuinely different multiplications are obtained. Banach algebras thus could be divided in those with a single Arens product, called \emph{Arens regular}, and those with two different Arens products, called \emph{non-Arens regular} or \emph{Arens irregular}.  All $C^\ast$-algebras can be counted among the former  (in the bidual of a $C^\ast$-algebra both Arens products coincide with the operator multiplication of the enveloping von Neumann algebra, see \cite[Theorem 7.1]{civin-yood}).
But Arens himself proved in \cite{arens} that the convolution semigroup algebra $\ell^1$ is not Arens regular,  and neither is the group algebra $L^1(G)$ for any  infinite $G$, as proved by Young in \cite{young1} about twenty years later.
%(Arens himself found that the semigroup algebra $\ell^1$ was not regular, see for the general case).
With important   examples to be found on either side, it is clear   why the problem of finding    conditions for  these two products to  be the same  has attracted the attention of many researchers ever since.

The crucial fact, when dealing with Arens regularity, is that all the Banach algebras $\mA$ have in common the space $\mW(\mA)$ of weakly almost periodic functionals. Due to Grothedieck's double limit criterion satisfied by these functionals,
$\mW(\mA)$ is precisely the subspace of $\mA^*$ where the two Arens-products agree, see \cite{pym}.
So when $\mA^* = \mW(\mA),$  there is only one Arens product, which is therefore separately $\sigma(\mA^{**}, \mA^*)$-continuous.

 This explains why it  is  natural  to  measure the degree of non-Arens regularity of an algebra $\mA$ by  the relative size of $\mW(\mA)$ with respect to $\mA^*,$ i.e., the size of the quotient space $\mA^*/\mW(\mA).$ The extreme cases appear when this quotient is trivial, i.e.,  the algebra is Arens regular, and
 when  $\mA^*/\mW(\mA)$ is as large as $\mA^*$. The algebras with this latter property were termed  extremely non-Arens regular
(enAr for short) by Granirer, see \cite{granirer}.

Non-Arens regularity may also be measured by the degree of defect of the separate $\sigma(\mA^{**}, \mA^*)$-continuity of
Arens products. In the extreme situation (i.e., when separate $\sigma(\mA^{**}, \mA^*)$-continuity of the product in $\mA^{**}$ happens only when one of the elements in the product is in $\mA$),  Dales and Lau called the algebra $\mA$ strongly Arens irregular (sAir for short),
see \cite{dales-lau}.
It may be worthwhile to note that the properties enAr and sAir do not imply each other,  examples distinguishing the two concepts can be found in \cite{HN2}. Natural examples of  algebras which are enAr but not sAir, can be   obtained from  Remark \ref{enA-no-sAr}
and Corollary \ref{F2} at the end of the paper. In these examples, the Fourier algebra
 $A(G)$ even satisfies a reinforced version of extreme non-Arens regularity, as the quotient $A(G)^\ast/\W(A(G))$ contains an \emph{isometric} copy of $A(G)^\ast$, see our forthcoming paper \cite{FGVN}.

After a section with preliminaries, our  first main theorem (Theorem \ref{ttr}) pinpoints quite precisely the natural conditions under which a general Banach algebra is enAr, namely the existence of an $\ell^1$-base that approximates upper and lower triangles.
With this theorem, not only most (if not all) of the known cases of non-Arens regular Banach algebras $\mA$ are proved to be so, but in many cases the algebra is proved to be enAr.
Once an $\ell^1$-base with cardinality $\eta$
is constructed in $\mA$, a copy of $\ell^\infty(\eta)$ will appear in the quotient space $\mA^*/\mW(\mA),$
and when $\eta$ equals the density character $d(\mA)$ of $\mA$, this leads to the algebra being enAr.

   These ideas have certainly  come up, at least in part, in previous works in special cases. The upper and lower triangles made of elements in  discrete groups and semigroups are quite apparent particularly in   \cite{baker-rejali}, \cite{craw-young} and \cite{rup}.

The first applications of Theorem \ref{ttr} come in Section 4 where we show how both  bounded approximate identities  (\emph{bai}'s for short) and bounded nets convergent to  invariance (\emph{TI-nets}) can be used to link the conditions in Theorem \ref{ttr} and  the non-Arens regularity of the algebra. It may be worthwhile to note that bai's and TI-nets are opposite to each other in the sense that while one measures non-Arens regularity locally,  the other measures it at infinity. In the familiar case of $L^1(G)$, a bai could be used whenever  $G$ is  non-discrete and a TI-net  when $G$ is not compact, while in the Fourier algebra $A(G)$, with $G$  additionally amenable, the roles of bai and TI are reversed.

We next tackle the application of Theorem \ref{ttr} to the  algebras of harmonic analysis related to the Fourier algebra. The Fourier algebra  $A(G)$, and more generally the Fig\`a-Talamanca Herz algebras $A_p(G)$, and the Segal algebras defined on  $L^1(G)\cap A(G)$, with norm $\norm{f}_S=\norm{f}_{L^1}+\norm{f}_{A(G)}$
are studied.

On the linear space $\left(A(G)\cap L^1(G),\norm{\cdot}_S\right)$ two different Segal algebra structures will be considered. Following \cite{forrsprowood07}
   the Segal algebra over $A(G) $ that is obtained after equipping  $A(G)\cap L^1(G)$ with pointwise product will be denoted by  $S^1A(G)$ and called the \emph{$1$-Segal Fourier algebra}. The Segal algebra over  $L^1(G)$ obtained when convolution is used as multiplication will be denoted by $LA(G)$ and referred to as the \emph{Lebesgue-Fourier algebra}. Arens regularity of these  Segal algebras was studied by Ghahramani and Lau in \cite{ghahlau1} and \cite{ghahlau2}.
In  \cite[Theorems 5.1 and 5.2]{ghahlau2}, they proved that  $S^1A(G)$ is Arens regular if and only if
 $G$ is discrete. The necessity of discreteness
 shall follow immediately from our Theorem \ref{TI}, and will be improved further by showing that
the quotient space $
S^1A(G)^*/\mW(S^1A(G))$ is at least non-separable when $G$ is non-discrete. In particular, this implies  that  $S^1A(G)$ is enAr
when $G$ is non-discrete and second countable, see Theorem \ref{ArNotAir1}.

Ghahramani and Lau proved further in
\cite[Theorem 5.1]{ghahlau2} that the convolution  algebra $LA(G)$ of a unimodular group  is Arens regular if and only if $G$ is compact.
Again using Theorem \ref{ttr}, this  theorem shall be strengthened in Theorem \ref{ArNotAir2} by disposing of the unimodularity condition
on $G$ and  by showing that
the quotient space $LA(G)^*/\mW(L(G))$ has $\ell^\infty(\kappa(G))$ as a quotient whenever $G$ is  non-compact.
It is then immediate that  $LA(G)$ is enAr when in addition $G$ satisfies $\kappa(G)\ge \chi(G)$, where $\kappa(G)$ is the compact covering of $G$ and $\chi(G)$ is its local weight of $G$.

Concerning the Fig\`a-Talamanca Herz algebra, Forrest   proved in \cite[Theorem 3.2]{forrest91}   that  $A_p(G)$ is non-Arens regular when $G$ is not discrete and also when  $A_p(G)$ is weakly sequentially complete (this is always true if $p=2$) and  $G$ contains a  discrete and amenable open subgroup.
With the help of Theorem \ref{ttr}, a copy of $\ell^\infty$ is found in the quotient space $A_p(G)^\ast/\W(A_p(G))$
whenever $G$ is either non-discrete, or discrete with an infinite %$\textcolor{red}{{\tt To be removed} ultra-weakly}
 amenable subgroup (assuming that $A_p(G)$ is weakly sequentially complete in this second case).
We deduce in particular that $A_p(G)$ is enAr  when $G$ is either a non-discrete second countable group, or a discrete countable group containing an infinite amenable group and $p=2$.

\section{Preliminaries and definitions}

Let $\mathscr{A}$ be a Banach algebra and let  $\mA^*$ and  $\mA^{**}$ be its first and second Banach duals, respectively.
In his paper \cite{arens}, Arens defined two multiplications in $\mA^\bd$.
     To define these multiplications, consider first the following   actions of $\mA$ and $\mA^\bd$ on $\mA^*.$ For every $\mu, \nu \in \mA^\bd$,  $f\in \mA^*,$ and $\varphi,$ $\psi\in \mA$,
     \begin{align*}\label{action}
  \langle f \varphi,\psi\rangle&=\langle f,\varphi\psi\rangle  &&& \langle
  \varphi.f,\psi\rangle&=\langle f,\psi\varphi\rangle\quad \mbox{ (actions of $\mA$ on $\mA^\ast$) } \\
  \langle \nu f, \varphi\rangle &=\langle \nu,f\varphi \rangle &&&  \langle
  f.\mu, \varphi \rangle &=\langle \mu,\varphi.f \rangle \quad \mbox{ (actions of $\mA^\bd$ on $\mA^\ast$). }\end{align*}
  The multiplications  $\mu \nu$ and $\mu .\nu$ are then defined as
  \[
  \langle \mu  \nu, f \rangle = \langle \mu, \nu f \rangle
 \qquad \langle \mu. \nu, f \rangle = \langle \nu, f.\mu \rangle.
\]
  Arens \cite{arens} proved that these products make $ \mA^\bd$ into a Banach algebra that contains $\mA$ as a subalgebra. When these two
products coincide, the Banach algebra $\mA$ is said to be
\textit{Arens regular}.

In \cite{pym}, Pym considered the space $\mW(\mA)$ of weakly almost periodic
functionals on $\mA$, this is the space of all elements $f\in \mA^*$ such that the left orbit
\[L(f)=\{f\varphi:\varphi\in \mA,\;\|\varphi\|\le1\}\] or, equivalently, the right orbit
\[R(f)=\{\varphi.f:\varphi\in \mA,\;\|\varphi\|\le1\}\]is relatively weakly compact in $\mA^*$.  He proved that $\mA$ is Arens regular if and only if
$\mW(\mA)=\mA^*$, i.e., when the quotient
$\mA^*/\mW(\mA)$ is trivial.
For more information, the reader is directed to %\cite{dales},
\cite{dales-lau}, \cite{filali-singh}.

To describe the other end,  Granirer   \cite{granirer} introduced the concept of
extreme non-Arens regularity.

\begin{definition}\label{granirer} A Banach algebra $\mA$ is \emph{extremely non-Arens regular} (enAr for short)
when $\mA^*/\mW(\mA)$ contains a
closed subspace which has $\mA^*$ as a quotient.
\end{definition}

So far, the Banach algebras known to be enAr are: the group algebra $L^1(G)$ for any infinite locally compact group
\cite{FG} (some particular, but important, cases were previously proved in \cite{BF,granirer}), $\ell^1(S)$ for any infinite discrete cancellative semigroup \cite{FV} (the proof in \cite{BF}  is given for locally compact groups but can also  be applied also in this case) and the Fourier algebra  $A(G)$ for any locally compact group satisfying $\chi(G)\ge \kappa(G)$, where
$\chi(G)$ is the smallest cardinality of an open basis of the identity of
$G$ and $\kappa(G)$ is the smallest cardinality of a compact covering of $G$ (\cite{hu},  \cite{granirer} when $\chi(G)=\omega$).

To fix our terminology we need to recall a few elementary facts relating  Banach spaces and operators between them.
If  $E_1$ and $E_2$ are  Banach spaces, we say that \emph{ $E_1$ has $E_2$ as a quotient} if there is a surjective bounded linear map $T\colon E_1\to E_2$. We say that $E_2$  \emph{contains an isomorphic copy of} $E_1$ when there is a linear isomorphism of $E_1$ into $E_2$, i.e. when there is a linear  mapping $T : E_1\to E_2$
such that for some positive constants $K_1$ and $K_2$,
\[K_1\|x\|\le \|T x\|\le K_2\|x\|\quad\text{for all}\quad x\in E_1.\]

\begin{definition}\label{lone}
{\sc $\ell^1(\eta)$-base}: Let $E$ be a normed space and $\eta$ be an infinite cardinal.
  A bounded set  $B=\{a_\alpha\colon \alpha <\eta\}$ of cardinality $\eta$ is  an \emph{$\ell^1(\eta)$-base} in $E$, with constant $K>0$,
when the inequality
\[\sum_{n=1}^p |z_n|\le K\left\|\sum_{n=1}^p z_na_{\alpha_n}\right\|\label{lonecondition}\]
holds for all $p\in\N$ and for every possible choice of scalars $z_1,\ldots ,z_p$ and  elements $a_{\alpha_1},\ldots ,a_{\alpha_p}$ in $B.$
\end{definition}

\begin{remark}
If $B$ is an $\ell^1(\eta)$-base then clearly  every  $\mathbf{c}=(c_\alpha)_{\alpha<\eta}\in \ell^\infty(\eta)$,  defines a continuous linear functional $\phi_\mathbf{c}\colon \langle B \rangle\to \C.$ This is obtained simply by letting
\[\langle \phi_\mathbf{c}, \sum_{n=1}^p z_n a_{\alpha_n}\rangle=\sum_{n=1}^p z_nc_{\alpha_n},\; \text{ for each }\;\sum_{n=1}^p z_n a_{\alpha_n}\in \langle B\rangle,\] and noting that \[|\langle \phi_{\mathbf{c}}, \sum_{n=1}^p z_n a_{\alpha_n}\rangle|=
|\sum_{n=1}^p z_n c_{\alpha_n}|\le \|\mathbf c\|_\infty\sum_{n=1}^p |z_n|\le \|\mathbf c\|_\infty K \|\sum_{n=1}^p z_n a_{\alpha_n}\| .\]
Since this linear functional is bounded on $\langle B\rangle$, it may be extended to an element of $E^*$ by Hahn-Banach Theorem.

Note that $\|a_\alpha\|\ge \frac1K>0$  for every  element $a_\alpha \in B.$
\end{remark}

We finally record a standard fact from \cite{hu}. Recall that
the density character of a normed space $\mA$, denoted by $d(\mA)$, is the cardinality of the smallest
norm-dense subset of $\mA$.

 \begin{lemma} \label{hul} If $E$ is a normed space with density character $d(E)=\eta,$
then there is a linear isometry of $E^\ast$ into $\ell^\infty(\eta).$
\end{lemma}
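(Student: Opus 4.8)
The plan is to realise $E^*$ inside $\ell^\infty(\eta)$ by evaluating each functional against a fixed norm-dense set in the unit ball of $E$. Since $d(E)=\eta$, the closed unit ball $B_E=\{x\in E\col \|x\|\le 1\}$, being a subset of the metric space $E$, also has density character at most $\eta$; hence I may fix a set $\{d_\alpha\col \alpha<\eta\}$ that is norm-dense in $B_E$ and has cardinality exactly $\eta$ (padding with repetitions if fewer than $\eta$ points are needed). I would then define
\[
T\col E^*\lra \ell^\infty(\eta),\qquad T(f)=\bigl(f(d_\alpha)\bigr)_{\alpha<\eta}.
\]

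First I would check that $T$ is a well-defined bounded linear map: for each $f\in E^*$ and each $\alpha<\eta$ one has $|f(d_\alpha)|\le \|f\|\,\|d_\alpha\|\le \|f\|$, so $T(f)\in\ell^\infty(\eta)$ with $\|T(f)\|_\infty\le\|f\|$, and linearity in $f$ is immediate from the linearity of evaluation. The crux is the reverse inequality, which gives the isometry. Fixing $f\neq 0$ and $\varepsilon>0$, I would pick $x\in B_E$ with $|f(x)|>\|f\|-\varepsilon$ (possible since $\|f\|=\sup_{x\in B_E}|f(x)|$), then use density to choose $\alpha<\eta$ with $\|x-d_\alpha\|$ small enough that continuity of $f$ yields $|f(d_\alpha)|\ge |f(x)|-\varepsilon>\|f\|-2\varepsilon$. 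Letting $\varepsilon\downto 0$ gives $\|T(f)\|_\infty=\sup_{\alpha<\eta}|f(d_\alpha)|=\|f\|$, so $T$ is a linear isometry, as claimed.

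The only genuinely delicate point is the very first step, namely producing a norm-dense subset of $B_E$ of cardinality $\eta$ rather than merely a dense subset of $E$. This rests on the fact that in a metric space the density character is monotone under passage to subspaces (equivalently, $d(B_E)\le w(B_E)=w(E)=d(E)$, using that weight and density character coincide for metrisable spaces). Everything afterwards is a routine density-plus-continuity argument, and the continuity of each $f\in E^*$ is exactly what converts the supremum over the dense set into the operator norm.
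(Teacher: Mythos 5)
Your proof is correct and follows essentially the same route as the paper: fix a norm-dense subset $\{x_\alpha\colon\alpha<\eta\}$ of the unit ball and send $\psi\mapsto(\langle\psi,x_\alpha\rangle)_{\alpha<\eta}$, the isometry coming from $\|\psi\|=\sup_{x\in B_E}|\langle\psi,x\rangle|$ together with density and continuity. You simply spell out the two details the paper leaves implicit (that $B_E$ itself has a dense subset of cardinality $\eta$, and the reverse norm inequality), which is fine.
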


 \begin{proof}  To see this, let $\{x_\alpha:\alpha<\eta\}$ be a norm-dense subset in the unit ball of $E.$
  Define for each $\psi\in E^*,$ the function $v_\psi$ in $\ell^\infty(\eta)$
 by $v_\psi(\alpha)=\langle\psi, x_\alpha\rangle.$
 Then \[\mathcal I: E^\ast\to   \ell^\infty(\eta),\quad \mathcal I(\psi)=v_\psi,\]
 is a linear isometry of $\mA^*$ into $\ell^\infty(\eta).$
\end{proof}

 \begin{corollary} \label{lemma} Let $\mA$ be a Banach algebra and $\eta$ be an infinite cardinal number.
If $\mA^*/\mW(\mA)$ has $\ell^\infty(\eta)$ as a quotient and $\ell^\infty(\eta)$ contains an isomorphic copy of $\mA^*$ (for instance, when $\eta=d(\mA)$),
then $\mA$ is enAr.
\end{corollary}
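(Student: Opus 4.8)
The plan is to produce the required subspace by a pullback along the two maps we are handed, rather than by trying to realise $\ell^\infty(\eta)$ itself as a closed subspace of $\mA^*/\mW(\mA)$. Write the hypotheses as two bounded linear maps: a surjection $q\colon \mA^*/\mW(\mA)\to\ell^\infty(\eta)$ witnessing that $\ell^\infty(\eta)$ is a quotient, and an isomorphic embedding $j\colon \mA^*\to\ell^\infty(\eta)$ witnessing that $\ell^\infty(\eta)$ contains a copy of $\mA^*$. Since $j$ is bounded below, its range $j(\mA^*)$ is a \emph{closed} subspace of $\ell^\infty(\eta)$ and $j\colon \mA^*\to j(\mA^*)$ is a linear isomorphism with bounded inverse.

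First I would set $V:=q^{-1}\bigl(j(\mA^*)\bigr)$. As the preimage of a closed subspace under the continuous linear map $q$, this $V$ is a closed subspace of $\mA^*/\mW(\mA)$. Next I would check that $q$ maps $V$ onto $j(\mA^*)$: given $y\in j(\mA^*)\subseteq\ell^\infty(\eta)$, surjectivity of $q$ furnishes some $x$ with $q(x)=y$, and then $x\in V$ by the very definition of $V$, so $y=q(x)$ lies in $q(V)$. Thus the restriction $q|_V\colon V\to j(\mA^*)$ is a bounded surjection. Finally, composing with the bounded isomorphism $j^{-1}\colon j(\mA^*)\to\mA^*$ yields a bounded surjection $j^{-1}\circ q|_V\colon V\to\mA^*$. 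Hence the closed subspace $V$ of $\mA^*/\mW(\mA)$ has $\mA^*$ as a quotient, which is exactly the definition of $\mA$ being enAr (Definition \ref{granirer}). The parenthetical case $\eta=d(\mA)$ is covered because Lemma \ref{hul}, applied to $E=\mA$, already supplies a linear isometry, in particular an isomorphic embedding, of $\mA^*$ into $\ell^\infty(\eta)$, so the second hypothesis is then automatic.

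There is no serious obstacle here; the only thing to be careful about is to resist the natural but harder temptation to lift $\ell^\infty(\eta)$ to a closed subspace of $\mA^*/\mW(\mA)$ on which $q$ is an isomorphism. Such a lift would amount to splitting the short exact sequence $0\to\ker q\to \mA^*/\mW(\mA)\to\ell^\infty(\eta)\to 0$, i.e.\ to finding a bounded section of $q$, and this is not available for free, since $\ell^\infty(\eta)$ is not a projective Banach space. The pullback $V$ bypasses this entirely: we never need a section of $q$, only the elementary facts that preimages of closed subspaces are closed and that surjectivity is inherited by the restriction $q|_V$. The two mild points worth verifying explicitly are that $j(\mA^*)$ is genuinely closed (so that $V$ is closed) and that $j^{-1}$ is bounded, both of which follow at once from $j$ being bounded below.
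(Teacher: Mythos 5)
Your argument is correct and is precisely the standard way to fill in this corollary, which the paper states without proof: the pullback $V=q^{-1}(j(\mA^*))$ is the natural closed subspace witnessing Definition \ref{granirer}, and your checks (closedness of $j(\mA^*)$ via $j$ being bounded below, surjectivity of $q|_V$, boundedness of $j^{-1}$, and the reduction of the case $\eta=d(\mA)$ to Lemma \ref{hul}) are all the points that need verifying. Your closing remark is also apt: one cannot in general lift $\ell^\infty(\eta)$ to a closed subspace on which $q$ is an isomorphism, so the pullback, rather than a section of $q$, is the right mechanism.
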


\section{Triangles and weakly almost periodic functionals}
Our objective  in this section is to set  natural conditions under which a general Banach algebra is enAr.  The concept of \emph{triangle} in a directed set, adapted from \cite[Defintion 2.1]{chou90}, will be essential in this process.

By a directed set, it is always meant a set $\Lambda$ together  with a preorder $\preceq$ with the additional property that every pair of elements has an upper bound. We will use a single letter,  $\Lambda$ usually, to  denote a  directed set, the existence of $\preceq$ is implicitly assumed.

 \begin{definition}
  Let   $(\Lambda,\preceq)$ be a directed set and let $\Lambda_1,\Lambda_2$ be two cofinal subsets of $\Lambda$. If  $\mathcal U$ is a subset of $\Lambda_1\times \Lambda_2$, we say that
	\begin{enumerate}
\item  $\mathcal U$ is \emph{vertically cofinal}, when for every $\alpha\in \Lambda_1$, there exists $\beta_\alpha\in\Lambda_2$ such that $(\alpha,\beta)\in \mathcal U$ for every $\beta \in \Lambda_2$, $\beta\succeq \beta_\alpha$.
\item $\mathcal U$ is \emph{horizontally cofinal}, when for every $\beta\in\Lambda_2$, there exists $\alpha_\beta\in \Lambda_1$ such that $(\alpha,\beta)\in \mathcal U$ for every $\alpha\in \Lambda_1$,  $\alpha\succeq \alpha_\beta$.
\end{enumerate}
\end{definition}

\begin{definition}\label{dindex}
  Let $\mathcal{U}$  and $X$ be two  sets. We say that
	\begin{enumerate}
\item  $X$   is \emph{indexed} by $\mathcal U$, when  there exists  a surjective map
$x:\mathcal U\to X$.
 When     $\mathcal U\subset \Lambda\times \Lambda$, for  some other set $\Lambda$,  we say that $X$ is \emph{double-indexed} by
$\mathcal U$ and write
$X=\{x_{\alpha\beta}:(\alpha,\beta)\in \mathcal U\}$, where  $x_{\alpha\beta}=x(\alpha,\beta)$.
\item If $X$ is  double-indexed by $\mathcal U$,  we say it is \emph{vertically injective} if $x_{\alpha\beta}=x_{\alpha^\prime \beta^\prime}$ implies $\beta=\beta^\prime$ for every $(\alpha,\beta)\in\mathcal U$. If $x_{\alpha\beta}=x_{\alpha^\prime \beta^\prime}$ implies $\alpha=\alpha^\prime$ for every $(\alpha,\beta )\in \mathcal U$, we say that $X$ is  \emph{horizontally injective}.
\end{enumerate}
\end{definition}

 \begin{definition}\label{def:last}
  Let $\mA$ be a Banach algebra,  $(\Lambda,\preceq)$ be a directed set
  and  $\Lambda_1,\Lambda_2$ be two cofinal subsets of $\Lambda$.
  Consider
	  two subsets, $A$ and $B$,  of $\mA$ indexed, respectively, by   $\Lambda_1$ and $\Lambda_2$, i.e.,
	\[A= \left\{a_\alpha\colon \alpha\in \Lambda_1\right\}\quad\text{ and }\quad B=\left\{b_\alpha\colon \alpha\in  \Lambda_2\right\}.\]
 \begin{enumerate}
\item The  sets
 \begin{align*}T^{u}_{AB}&=\left\{a_\alpha  b_\beta\colon (\alpha, \beta) \in \Lambda_1\times \Lambda_2,\; \alpha\prec \beta\right\}\; \mbox{ and } \\
T^{l}_{AB}&=\left\{a_\alpha  b_\beta\colon (\alpha, \beta) \in \Lambda_1\times \Lambda_2,\; \beta\prec \alpha\right\}\end{align*} are called, respectively, the \emph{upper} and \emph{lower triangles defined by $A$ and $B$}.

\item A set $X\subseteq \mA$
is said to  \emph{approximate  segments in $T_{AB}^{u}$}, if there exists a horizontally cofinal set $\mathcal U$ in $\Lambda_1\times\Lambda_2$
so that $X$ is double-indexed as $X=\{x_{\alpha\beta}\colon (\alpha,\beta)\in \mathcal U\} $, and for each $\alpha\in \Lambda$,
 \[\lim_{\beta}\left\|x_{\alpha\beta} - a_\alpha b_\beta\right\|=0.\]
Note that, by considering an appropiate subset of $X$ we can assume that
$(\alpha,\beta)\in \mathcal{U}$ implies $\beta\succ \alpha$.
\item A set $X\subseteq \mA$ is said to \emph{ approximate segments in $T_{AB}^{l} $}, if there exists a vertically cofinal set $\mathcal U$ in $\Lambda_1\times\Lambda_2$ so that $X$ is double-indexed as $X=\{x_{\alpha\beta}\colon (\alpha,\beta)\in \mathcal U\} $, and for each $\beta\in \Lambda$,
\[\lim_{\alpha}
 \left\|x_{\alpha\beta}-a_\alpha b_\beta\right\|=0.\]
 As before, we can  assume here that$(\alpha,\beta)\in \mathcal{U}$ implies $\alpha\succ\beta$.
 \end{enumerate}\end{definition}

In Theorem \ref{ttr},  the  key theorem of the paper, we need to be able to partition a directed set of cardinality $\eta$ into as many cofinal subsets.  This is easily done when the cardinality is countable but may not be possible for general directed sets. We will use the following definition and  Theorem, due to van Douwen to delimitate the
pathological situations that will  not be  pertinent to our applications.
 \begin{definition}
   Let $\Lambda$ be a directed set and for every $\xi \in \Lambda$, define $\xi^+=\left\{\alpha \in \Lambda\colon \xi \prec \alpha\right\}$.
   We define the \emph{true cardinality} of $\Lambda$ as $\mathrm{tr}\left|\Lambda\right|=\min_{\xi\in \Lambda} \left|\xi^+\right|$.
 \end{definition}

 \begin{lemma}[van Douwen, Lemma of \cite{douw91}]\label{VD}
 If $\Lambda$ is a directed set, then $\Lambda$ admits a pairwise disjoint collection of $\mathrm{tr}\left|\Lambda\right|$-many cofinal subsets of $\Lambda$ each having true cardinality  $|\Lambda|$. \end{lemma}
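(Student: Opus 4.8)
The plan is to first strip $\Lambda$ down to a well-behaved cofinal core and then build the cofinal subsets along a tree sitting inside that core. Write $\kappa=\mathrm{tr}|\Lambda|$ and assume $\kappa$ is infinite (if $\kappa$ is finite the directed set has a largest element up to the preorder and the statement is vacuous or trivial). Choose $\xi_0$ realizing the minimum, so that $|\xi_0^+|=\kappa$, and set $\Lambda_0=\xi_0^+$. First I would verify three facts about $\Lambda_0$: it is cofinal in $\Lambda$ (given $\eta$, a common upper bound $\mu$ of $\eta$ and $\xi_0$ either lies in $\xi_0^+$, and then dominates $\eta$, or satisfies $\mu\preceq\xi_0$, in which case $\eta\preceq\xi_0$ and any element of $\xi_0^+$ dominates $\eta$); it has cardinality $\kappa$; and for every $\eta\in\Lambda_0$ one has $\eta^+\subseteq\Lambda_0$ with $|\eta^+|=\kappa$, so that inside $\Lambda_0$ every up-set has size $\kappa$ and, by the same upper-bound argument, is cofinal. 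Since a cofinal subset of $\Lambda_0$ is cofinal in $\Lambda$, and the true cardinality of a subset is computed from up-sets $X\cap x^+$ that are unchanged by passing to $\Lambda_0$, it suffices to produce the family inside $\Lambda_0$. Thus I may assume henceforth that $|\Lambda|=\kappa$, every $c^+$ has size $\kappa$, and every $c^+$ is cofinal. (The subsets produced will then have true cardinality $\kappa=\mathrm{tr}|\Lambda|$, which is exactly $|\Lambda|$ in the situations where the lemma is applied.)

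Next I would fix an enumeration $\Lambda=\{e_\xi:\xi<\kappa\}$ and, for each $i<\kappa$, build a family $\{d^i_s:s\in\kappa^{<\omega}\}$ of \emph{distinct} elements of $\Lambda$ indexed by finite sequences from $\kappa$, subject to two demands: $d^i_s\prec d^i_{s^\frown j}$ for every $j<\kappa$ (each node lies strictly below all of its $\kappa$ children), and $d^i_{\langle\xi\rangle}\succeq e_\xi$ for every $\xi<\kappa$ (the children of the root dominate a cofinal set). Setting $C_i=\{d^i_s:s\in\kappa^{<\omega}\}$, cofinality of $C_i$ is immediate from the second demand; the $C_i$ are pairwise disjoint because all chosen elements are distinct across all indices; and each $C_i$ has true cardinality $\kappa$, since any $d^i_s$ has its $\kappa$ distinct children $d^i_{s^\frown j}$ lying above it in $C_i$, giving $|C_i\cap (d^i_s)^+|\ge\kappa$, while $|C_i|\le\kappa$ forces equality. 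This produces the required $\kappa$-many pairwise disjoint cofinal subsets, each of true cardinality $\mathrm{tr}|\Lambda|$.

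The one real obstacle is keeping all the elements distinct, i.e.\ always having a \emph{fresh} element available above a prescribed point: because every up-set now has size exactly $\kappa$ and the total number of elements consumed is again $\kappa$, a careless stage-by-stage assignment could exhaust some up-set before its children are chosen. I would defuse this by carrying out \emph{all} the assignments $(i,s)\mapsto d^i_s$ along a single well-ordering of the index set $N=\{(i,s):i<\kappa,\ s\in\kappa^{<\omega}\}$ of order type $\kappa$ that extends the parent-before-child relation $(i,s)\prec^{\ast}(i,t)\iff s\subsetneq t$. Such a linearization exists by a standard ranking/topological-sort argument: in $\prec^{\ast}$ every node has only finitely many predecessors (the proper initial segments of $s$), so the ranks are finite and the underlying set of size $\kappa$ can be listed in type $\kappa$ while respecting $\prec^{\ast}$; consequently at every stage $\alpha<\kappa$ only $|\alpha|<\kappa$ elements have been committed. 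When the stage for $(i,s)$ arrives, its parent value $c=d^i_{s^-}$ is already fixed, and I must pick $d^i_s$ fresh in $c^+$ (for the root, any fresh element; for $s=\langle\xi\rangle$, a fresh element of $\{x\succ c:x\succeq e_\xi\}$). A short directedness argument shows this last set contains an entire up-set, hence has size $\kappa$; since fewer than $\kappa$ elements have so far been used, a fresh admissible choice is always possible. This closes the construction.
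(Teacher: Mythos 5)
Your argument is correct, but it should be said up front that the paper offers no proof of Lemma \ref{VD} at all: the result is simply quoted from van Douwen \cite{douw91}, with Remark \ref{VDrem} observing that the ``true cardinality'' refinement is implicit in his proof. So you have supplied a self-contained argument where the paper supplies a citation. Your route --- pass to the cofinal core $\Lambda_0=\xi_0^+$, where every up-set has size and true cardinality $\kappa=\mathrm{tr}|\Lambda|$, and then grow $\kappa$ pairwise disjoint trees $\{d^i_s\colon s\in\kappa^{<\omega}\}$ by a single transfinite recursion of length $\kappa$ --- delivers exactly what the paper uses, and the tree structure is genuinely needed: a merely cofinal subset can have small true cardinality (e.g.\ $\{\omega_n\colon n<\omega\}$ in the ordinal $\omega_\omega$), so cofinality alone would not give condition $\mathrm{tr}|I_{\lambda,j}|=\eta$ required in the proof of Theorem \ref{ttr}. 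Two points deserve comment. First, as you note, your construction yields pieces of true cardinality $\mathrm{tr}|\Lambda|$ rather than $|\Lambda|$; this is unavoidable, since for any cofinal $C$ one may pick $c\in C$ above a $\xi_0$ witnessing the minimum, whence $c^+\cap C\subseteq\xi_0^+$ and $\mathrm{tr}|C|\leq\mathrm{tr}|\Lambda|$. The statement as printed is therefore attainable only when $\mathrm{tr}|\Lambda|=|\Lambda|$, which is the case in all of the paper's applications, and $\mathrm{tr}|\Lambda|$ is what Theorem \ref{ttr} actually invokes; your proof gets this right. Second, the one step you leave compressed is the existence of a well-ordering of $N=\{(i,s)\}$ of order type exactly $\kappa$ extending parent-before-child: listing by rank alone gives type $\kappa\cdot\omega$, whose initial segments are too large for the freshness argument. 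The claim is nevertheless true and easily supplied: take a bijection $b\colon N\to\kappa$, put $\rho(x)=\max\{b(y)\colon y\preceq^\ast x\}$ (a finite maximum), and order lexicographically by $\rho$ and then by any well-order of each fibre extending $\prec^\ast$; since $\{x\colon\rho(x)\leq\alpha\}\subseteq b^{-1}([0,\alpha])$ has size $<\kappa$, every proper initial segment has size $<\kappa$ and the type is $\kappa$. With that line added your proof is complete.
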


\begin{remarks}\label{VDrem}
In \cite{douw91},   Lemma \ref{VD} only states that each of  the disjoint cofinal  sets have cardinality $|\Lambda|$ but its proof also shows that they actually have true cardinality $|\Lambda|$.

The case when the set $\Lambda$ is countable does not need  the generality of van Douwen's lemma. One only needs to know that
in this case the set may be partitioned into infinitely many infinite sets. This will be the case in Theorems \ref{unify}, \ref{ArNotAir1}, \ref{ArNotAir30} and \ref{ArNotAir3}.

In our application to  the algebras in harmonic analysis, our set $\Lambda$ will be the initial ordinal associated to the cardinal $\chi(G)$, the smallest cardinality of an open basis  of the identity $e$ of
$G$, or $\kappa(G)$, the smallest cardinality of a compact covering  of $G$, see e.g.,  Theorem \ref{ArNotAir2}.
In both situations,  $|\xi^+|=\Lambda$ for each $\xi\in \Lambda$ and so $\mathrm{tr}|\Lambda|=|\Lambda|$.
%Or one can simply split $\Lambda$ into $\Lambda$-many copies of itself using the  fact that $\Lambda=\Lambda\times \Lambda.$

This way of partitioning  has also come up in some of our previous papers (see for example \cite{BF} and \cite{MMM}). In these works the directed set $\Lambda$ consisted of  a compact cover $\{K_\alpha\colon \alpha<\kappa(G)\} $ of $G$ with cardinality $\kappa(G)$ or  of a neighbourhood base at the identity $\{U_\alpha\colon \alpha<\chi(G)\}$  with cardinality $\chi(G)$, both  directed by  set inclusion with $K_\alpha\preceq K_\beta$  if and only if $K_\alpha\subseteq K_\beta$, and
 $U_\alpha\preceq U_\beta$ if  and only if $U_\alpha\supseteq U_\beta$.
In each case, the minimality of $\chi(G)$ and $\kappa(G)$ proves that $\mathrm{tr}|\Lambda|=\Lambda$.
\end{remarks}

Before proving our main theorem, we  recall  a definition and a fact obtained  in \cite{FG} that help to  simplify the remaining proofs.

\begin{definition}\label{def:preserve}
Let $T\colon E_1\to E_2$ be a linear map between Banach spaces. If  $F$ is a closed subspace of $E_2$, we say that $T$ is preserved by   $F$  when there is $c>0$ such that the following property holds:
\begin{align*}
%\label{preserve}
   %\tag{$\ast$}
& \left\|T\xi-\phi\right\|\geq c\|\xi\|, \quad \mbox{for all } \phi\in F  \mbox{ and }\xi \in E_1.
\end{align*}
  \end{definition}

   The following elementary fact is proved in \cite[Lemma 2.2]{FG} for isometries. Its proof is easily adapted to this case.
\begin{lemma}  \label{quotient}
Let $T\colon E_1\to E_2$ be a linear isomorphism of the Banach spaces $E_1$ into $E_2$  and let $D,$ $F$     be closed linear subspaces of $E_2$ with $D\subseteq F$. Denote by  $Q \colon E_2\to  E_2/D$ the quotient map.  If $T$ is preserved by $F$, then  the  map $Q\circ T\colon E_1\to E_2/D$ is a linear  isomorphism.
    \end{lemma}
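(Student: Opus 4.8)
The plan is to verify directly the two-sided norm estimate that characterizes a linear isomorphism into a Banach space, namely to produce positive constants $c$ and $K_2$ with $c\norm{x}\le\norm{(Q\circ T)x}\le K_2\norm{x}$ for every $x\in E_1$. The upper bound is immediate: $Q\circ T$ is a composition of bounded linear maps, and since the quotient map $Q$ has norm at most $1$, one gets $\norm{(Q\circ T)x}\le\norm{Tx}\le K_2\norm{x}$, where $K_2$ is the upper constant supplied by the hypothesis that $T$ is a linear isomorphism of $E_1$ into $E_2$.

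The substance lies in the lower bound, and here I would use the description of the quotient norm as a distance. For $x\in E_1$ set $y=Tx$, so that
\[\norm{(Q\circ T)x}=\norm{Q(y)}_{E_2/D}=\inf_{d\in D}\norm{y-d}.\]
The inclusion $D\subseteq F$ is exactly what drives the argument: enlarging the index set of an infimum can only decrease it, whence
\[\inf_{d\in D}\norm{Tx-d}\ge\inf_{\phi\in F}\norm{Tx-\phi}.\]
Finally, the hypothesis that $T$ is preserved by $F$ says precisely that $\norm{T\xi-\phi}\ge c\norm{\xi}$ for all $\phi\in F$ and $\xi\in E_1$; applied with $\xi=x$ it bounds every term $\norm{Tx-\phi}$ below by $c\norm{x}$, and therefore also the infimum over $\phi\in F$. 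Chaining these inequalities yields $\norm{(Q\circ T)x}\ge c\norm{x}$.

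Combining the two estimates gives $c\norm{x}\le\norm{(Q\circ T)x}\le K_2\norm{x}$ for all $x\in E_1$, which is the definition of $Q\circ T$ being a linear isomorphism of $E_1$ into $E_2/D$. The proof has no real obstacle; the one place where care is needed — and where a slip is easy — is getting the direction of the monotonicity right when passing from the infimum over $D$ to the infimum over the larger set $F$. I would also remark that the preservation hypothesis, taken with $\phi=0\in F$, already forces $\norm{Tx}\ge c\norm{x}$, so the lower constant genuinely comes from $F$ and the injectivity of $T$ by itself plays no role beyond guaranteeing the upper estimate.
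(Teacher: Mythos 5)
Your proof is correct. The paper gives no proof of this lemma, deferring to \cite[Lemma 2.2]{FG} and noting that the isometric argument adapts; your argument --- writing $\norm{Q(Tx)}$ as $\inf_{d\in D}\norm{Tx-d}$, using $D\subseteq F$ to pass to the smaller infimum over $F$, and then invoking the preservation estimate --- is exactly that standard adaptation, with the monotonicity of the infimum applied in the right direction.
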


	 \begin{theorem}\label{ttr}
 Let $\mA$ be a Banach algebra and $\eta$  an infinite  cardinal number.
 Suppose that  $\mA$ contains two bounded
%disjoint
 subsets $A$ and $B$ indexed, respectively, by two cofinal subsets $\Lambda_1$ and $\Lambda_2$   of  a common directed set $(\Lambda,\preceq)$ with $\mathrm{tr}\left|\Lambda_1\right|=
 \mathrm{tr}\left|\Lambda_2\right|=\eta$. Suppose as well that $\mA$ contains two other disjoint  sets $X_1$ and $X_2$ with the following properties:
 \begin{enumerate}
      			        \item $X=X_1\cup X_2$ is an $\ell^1(\eta)$-base in $\mA$ with constant $K>0$  contained in the ball of radius $M>0$.
   \item  $X_1$  and $X_2$ approximate  segments in $T_{AB}^{u}$ and $T_{AB}^{l} $, respectively.
       \item Either $X_1$ is vertically injective and $X_2$  is  horizontally injective or vice versa.
 \end{enumerate}
Then there is a linear isomorphism {\small $\mathcal{J}\colon \ell^\infty(\eta)\to{\small \dfrac{ \langle X\rangle ^\ast }{ {\restr{\mW(\mA)}{\langle X\rangle}}}}$}.
In particular, $\mA$ is non-Arens regular.
 \end{theorem}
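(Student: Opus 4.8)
The plan is to produce \emph{not} the tautological identification of $\langle X\rangle^\ast$ with $\ell^\infty(\eta)$ coming from the $\ell^1$-base (that map is onto $\langle X\rangle^\ast$, so composing it with the quotient by $\restr{\mW(\mA)}{\langle X\rangle}$ could never be injective unless the latter were trivial), but rather a carefully chosen embedding $S\colon\ell^\infty(\eta)\to\langle X\rangle^\ast$ whose range is uniformly transverse to the weakly almost periodic functionals. Once such an $S$ is built and shown to be \emph{preserved by} $F:=\overline{\restr{\mW(\mA)}{\langle X\rangle}}$ in the sense of Definition \ref{def:preserve}, Lemma \ref{quotient} (applied with $D=F$ and $Q\colon\langle X\rangle^\ast\to\langle X\rangle^\ast/F$ the quotient map) immediately yields that $\mathcal J:=Q\circ S$ is a linear isomorphism of $\ell^\infty(\eta)$ into the quotient. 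Non-Arens regularity then follows: the quotient is nonzero, while the restriction map $\mA^\ast\to\langle X\rangle^\ast$ is onto by Hahn--Banach, so $\mA^\ast=\mW(\mA)$ would force $\restr{\mW(\mA)}{\langle X\rangle}=\langle X\rangle^\ast$, a contradiction.

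To build $S$, assume after possibly swapping $X_1$ and $X_2$ that $X_1$ is vertically injective, so each $x\in X_1$ determines unambiguously its second index $\beta$. Using Lemma \ref{VD}, partition $\Lambda_2$ into $\eta$ pairwise disjoint cofinal subsets $\{\Lambda_2^{(i)}:i<\eta\}$ (when $\eta=\omega$ only a partition into infinitely many infinite sets is needed, as in Remark \ref{VDrem}). For $\mathbf w=(w_i)_{i<\eta}\in\ell^\infty(\eta)$, assign to each $x_{\alpha\beta}\in X_1$ with $\beta\in\Lambda_2^{(i)}$ the coefficient $w_i$, and assign $0$ to every element of $X_2$; let $S\mathbf w\in\langle X\rangle^\ast$ be the functional determined by these coefficients as in the Remark following Definition \ref{lone}. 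That Remark gives $\|S\mathbf w\|\le K\|\mathbf w\|_\infty$, and testing on a single $x_{\alpha\beta}\in X_1$ whose block nearly realizes $\|\mathbf w\|_\infty$ gives $\|S\mathbf w\|\ge M^{-1}\|\mathbf w\|_\infty$; hence $S$ is a linear isomorphism into $\langle X\rangle^\ast$.

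The crux is to show $S$ is preserved by $F$. Fix $\mathbf w$ with $\|\mathbf w\|_\infty=1$, a weakly almost periodic $\psi\in\mW(\mA)$, and $\delta>0$, and choose a block index $i$ with $|w_i|>1-\delta$. Using cofinality of $\Lambda_1$ and of $\Lambda_2^{(i)}$, together with the horizontal and vertical cofinality of the index sets $\mathcal U$ of $X_1,X_2$, construct interleaved sequences $\alpha_1\prec\beta_1\prec\alpha_2\prec\beta_2\prec\cdots$ with all $\beta_n\in\Lambda_2^{(i)}$, so that $(\alpha_m,\beta_n)$ indexes an element of $X_1$ when $m\le n$ (upper triangle) and of $X_2$ when $m>n$ (lower triangle); after passing to subsequences, arrange that all iterated limits of $\psi(a_{\alpha_m}b_{\beta_n})$ exist. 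Now evaluate $h:=S\mathbf w-\restr{\psi}{\langle X\rangle}$ on the \emph{base elements} $x_{\alpha_m\beta_n}\in\langle X\rangle$, which keeps the test vectors inside $\langle X\rangle$ with norm at most $M$, and replace $\psi(x_{\alpha_m\beta_n})$ by $\psi(a_{\alpha_m}b_{\beta_n})$ up to an error tending to $0$ by the segment-approximation hypotheses. On the upper triangle the coefficient of $h$ equals $w_i$ and on the lower triangle it equals $0$, so the two iterated limits of $h(x_{\alpha_m\beta_n})$ are $w_i-R$ and $-R'$, where $R=\lim_m\lim_n\psi(a_{\alpha_m}b_{\beta_n})$ and $R'=\lim_n\lim_m\psi(a_{\alpha_m}b_{\beta_n})$. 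Grothendieck's double-limit criterion for the weakly almost periodic $\psi$ forces $R=R'$, so the two iterated limits of $h$ differ by exactly $w_i$, while each is bounded in modulus by $M\|h\|$. Hence $\|h\|\ge|w_i|/(2M)>(1-\delta)/(2M)$; letting $\delta\to0$ gives $\|S\mathbf w-\phi\|\ge\|\mathbf w\|_\infty/(2M)$ for every $\phi\in\restr{\mW(\mA)}{\langle X\rangle}$, and the inequality passes to the closure $F$, so $S$ is preserved by $F$ with constant $1/(2M)$.

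I expect the main obstacle to be the bookkeeping in this last step: simultaneously arranging the interleaving $\alpha_1\prec\beta_1\prec\cdots$, confining the $\beta_n$ to a single van Douwen block while keeping them cofinal, respecting the horizontal and vertical cofinality of $\mathcal U$ so that the sampled products genuinely fall in the upper and lower triangles, and extracting subsequences along which all four iterated limits of $\psi(a_{\alpha_m}b_{\beta_n})$ exist, so that Grothendieck's criterion applies cleanly. The conceptual engine is simple---the unknown weakly almost periodic part contributes the \emph{same} quantity $R$ to both iterated limits and therefore cancels, leaving a gap equal to the encoded coordinate $w_i$---and everything else is the isomorphism-into packaging supplied by Lemma \ref{quotient}.
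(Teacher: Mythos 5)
Your proposal is correct and follows essentially the same route as the paper's proof: encode each $\mathbf{w}\in\ell^\infty(\eta)$ as a functional on the $\ell^1$-base via a van Douwen partition into cofinal blocks, use the triangle approximations plus Grothendieck's double-limit criterion to show the encoded coordinate survives as the gap between the two iterated limits (so the embedding is preserved by $\restr{\mW(\mA)}{\langle X\rangle}$), and finish with Lemma \ref{quotient}. The only differences are cosmetic: the paper assigns $+\mathbf{c}(\lambda)$ on $X_1$ and $-\mathbf{c}(\lambda)$ on $X_2$ (partitioning both $\Lambda_1$ and $\Lambda_2$, and using both injectivity hypotheses) and works with nets and refinements rather than your interleaved sequences, yielding the constant $1/M$ in place of your $1/(2M)$.
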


 \begin{proof}
 Put $A=\left\{a_\alpha \colon \alpha\in\Lambda_1 \right\}$ and
  $B=\left\{b_\beta\colon \beta\in \Lambda_2\right\}$.
		 Let in addition \[X_1=\left\{x_{\alpha\beta}\colon(\alpha,\beta) \in \mathcal{U}_1\right\}\mbox{ and }
		X_2=\left\{x_{\alpha\beta}\colon(\alpha,\beta)\in \mathcal{U}_2\right\}\] be the enumerations of $X_1$ and $X_2$ satisfying, respectively,   Conditions (ii) and (iii) 	in Definition \ref{def:last}, where $\mathcal{U}_1,\mathcal{U}_2$ are, respectively,  vertically cofinal and horizontally cofinal subsets of $\Lambda_1\times \Lambda_2$ such that    $(\alpha,\beta)\in \mathcal{U}_1$ implies $\beta \succ \alpha$ and $(\alpha,\beta)\in \mathcal{U}_2$ implies $\alpha \succ \beta$.
We assume that $X_1$ is vertically injective and $X_2$ is horizontally injective. The proof in the other case  proceeds with straightforward modifications.

We first use Lemma \ref{VD} to partition $\Lambda_j=\bigcup_{\lambda <\eta}I_{\lambda,j}$, $j=1,2$ in $\eta$-many cofinal subsets, with  $\mathrm{tr}|I_{\lambda,j}|=\eta$, for every $\lambda<\eta$ and $j=1,2$.

Now, for each  $\mathbf{c}\in \ell^\infty(\eta)$ we define $\psi_\mathbf{c} \colon X_1\cup X_2 \to \C$ by:
 \[
 \left\langle\psi_\mathbf{c}, x_{\alpha\beta}\right\rangle
 =
 \left\{ \begin{array}{rl}
   \mathbf{c}(\lambda),   & \mbox{ if }  (\alpha,\beta)\in \mathcal{U}_1  \mbox{ and }\beta\in I_{\lambda,2}, \\
 -  \mathbf{c}(\lambda),& \mbox{ if }  (\alpha,\beta)\in \mathcal{U}_2 \mbox{ and }\alpha\in I_{\lambda,1}.
 \end{array}\right.\]
We first observe that $\psi_\mathbf{c}$ is a well-defined bounded function. It could happen that $x_{\alpha\beta}=x_{\alpha^\prime\beta^\prime}$ but, in case $x_{\alpha\beta}\in X_1$, vertical injectivity would then imply that $\beta=\beta^\prime$ and thus that  $\psi_\mathbf{c}(x_{\alpha\beta})=
\psi_\mathbf{c}(x_{\alpha^\prime\beta^\prime})$. The same argument applies when $x_{\alpha\beta}\in X_2$.

Since $X_1\cup X_2$ is an $\ell^1(\eta)$-base, this map can be extended  by linearity to a bounded linear functional  $\psi_{\mathbf c}\in \langle X\rangle^\ast$.

We next check that the linear map
$\mathcal{I}\colon \ell^\infty(\eta)\to \langle X\rangle ^\ast$ given by $\mathcal{I}(\mathbf{c})=\psi_\mathbf{c}$ is a linear isomorphism     preserved (in the sense of Definition \ref{def:preserve}) by $\restr{\wap(\mA)}{\langle X\rangle}$.
We first observe that
\begin{equation}
  \label{eq:trivial}
  \norm{\mathcal{I}(\mathbf{c})}_{ _{\mbox{\scriptsize$\langle X\rangle^\ast$ }}}\leq  K \norm{\mathbf{c}}_\infty.
\end{equation}

Let now  $\mathbf{c}\in  \ell^\infty(\eta)$ and $f\in \wap(\mA)$ be given.

Fix $\lambda<\eta$ and $\varepsilon>0$.

Since $A$ and $B$ are bounded, we may assume (using that $f\in \W(\mA) $ and that bounded sets of $\mA$ are relatively weak$^\ast$-compact in $\mA^\bd$), that there are refinements $\Lambda_1^\prime $ and $\Lambda_2^\prime$ of, respectively, $I_{\lambda,1}$ and $I_{\lambda,2}$, such that
\begin{equation}\label{wap} \lim_{\alpha \in \Lambda_1^\prime}\lim_{\beta\in \Lambda_2^\prime  } f(a_\alpha b_\beta)=
\lim_{\beta\in \Lambda_2^\prime  }\lim_{\alpha\in \Lambda_1^\prime } f(a_\alpha b_\beta).\end{equation}
Using that $X_2$ approximates $T_{AB}^{l}$, for every $\beta  \in \Lambda_2^\prime$ we can find $\alpha_\beta\in \Lambda_1^\prime$ with $\alpha_\beta \succeq \beta$  (recall that $\Lambda_1^\prime$ is cofinal in $\Lambda$) such that, whenever $\alpha\in \Lambda_1^\prime$, $\alpha\succeq \alpha_\beta$:
\begin{align*}
 \norm{x_{\alpha  \beta}-a_{\alpha } b_{\beta}}&<\varepsilon\quad  \mbox{ and }\\
\bigl|f\left(a_{\alpha} b_\beta\right)    -\lim_{\alpha}f(a_\alpha b_\beta)\bigr|&<\varepsilon.
\end{align*}
We can find in the same way for every $\alpha  \in \Lambda_1^\prime$, $\beta_\alpha  \in  \Lambda_2^\prime$ with $\beta_\alpha\succeq \alpha$ such that, whenever $\beta \in  \Lambda_2^\prime$, $\beta\succeq \beta_\alpha$:
\begin{align*}
 \norm{x_{\alpha \beta }-a_\alpha b_{\beta }}&<\varepsilon\quad  \mbox{ and }\\
\bigl|f\left(a_\alpha b_\beta\right)    -\lim_{\beta}f(a_\alpha b_\beta)\bigr|&<\varepsilon.
\end{align*}
Now, these last inequalities, together with  equality \eqref{wap}, allow us to find $\alpha \in  \Lambda_1^\prime$, $ \beta\in  \Lambda_2^\prime$  large enough for
\begin{equation*}
  \label{mix}
  \bigl|f\left(a_{\alpha_{\beta}} b_\beta\right)
  -f\left(a_{\alpha}b_{\beta_\alpha}\right)\bigr|
  <4\varepsilon
\end{equation*}
and  this implies that
\begin{equation}
  \label{end}
  \bigl|f\left(x_{\alpha_\beta\beta})-f(x_{\alpha\beta_\alpha}\right)\bigr|\leq 2 \norm{f}\varepsilon+4\varepsilon.
\end{equation}
Then
\begin{align*}
\left\|\mathcal I(\mathbf{c})-\restr{f}{\langle X\rangle} \right\|_{ _{\langle X\rangle^\ast}}&= \left\|\psi_\mathbf{c}-\restr{f}{\langle X\rangle}\right\|_{ _{\langle X\rangle^\ast}} \\& \ge \frac{1}{2M}\bigl(\bigl|\psi_\mathbf{c}(x_{\alpha_\beta \,\beta})-f(x_{\alpha_\beta\,\beta})\bigr|
+\bigl|\psi_\mathbf{c}(x_{\alpha\beta_\alpha})-
f(x_{\alpha\beta_\alpha})\bigr|\bigr)\\&
\ge\frac{1}{2M}\bigl|\psi_\mathbf{c}(x_{\alpha_\beta\,\beta})-f( x_{\alpha_\beta\,\beta})-\psi_\mathbf{c}(x_{\alpha \beta_\alpha})+f(x_{\alpha \beta_\alpha})\bigr|\\&
\ge\frac{1}{2M}\bigl|\psi_\mathbf{c}(x_{\alpha_\beta\,\beta})-\psi_\mathbf{c}(x_{\alpha \beta_\alpha})\bigr|-\frac{1}{2M}\bigl|f(x_{\alpha_\beta\,\beta})-f(x_{\alpha \beta_\alpha})\bigr|\\&
\ge \frac{1}{M} \bigl|\mathbf{c}(\lambda)\bigr|-\frac{\norm{f}}{M}
\varepsilon-\frac{2}{M}\varepsilon,\end{align*} where for the last inequality we have used \eqref{end} and that $\psi_\mathbf{c}(x_{\alpha_\beta\,\beta})=-\mathbf{c}(\lambda)$ and $\psi_\mathbf{c}(x_{\alpha_\beta\,\beta})=\mathbf{c}(\lambda).$
%\frac12|c(\lambda)f_\lambda([{\alpha_\beta},\beta])-c(\lambda)f_\lambda([\alpha, {\beta_\alpha}])| -\frac\epsilon2\\&=
We conclude therefore that  \[\left\|\mathcal I(\mathbf{c})-\restr{f}{\langle X\rangle} \right\|_{ _{\langle X\rangle^\ast}}\ge\frac{1}{M}\|\mathbf{c}\|_\infty, \quad \mbox{ for every } \mathbf{c}\in \ell^\infty(\eta) \mbox{ and every }f\in \W(\mA).\]
This and \eqref{eq:trivial} show that $\mathcal{I}$ is a linear isomorphism preserved by $\restr{\wap(\mA)}{\langle X \rangle}$. It thus induces, by Lemma   \ref{quotient}, a  linear isomorphism  $\mathcal{J}\colon \ell^\infty(\eta)\to     \dfrac{    \langle X\rangle^\ast}{\restr{\W(\mA)}{\langle X\rangle}}$. The proof is complete.
 \end{proof}

\begin{corollary} \label{corollary}
If the hypothesis of Theorem \ref{ttr} are satisfied, then there is a bounded linear linear map of $\mA^\ast/\W(\mA)$ onto $\ell^\infty(\eta)$. If, in addition,
 $d(\mA)\leq \eta$, then $\mA$ is enAr.
\end{corollary}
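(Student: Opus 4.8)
The plan is to deduce the statement from Theorem~\ref{ttr} by combining three ingredients: the injectivity of $\ell^\infty(\eta)$, the Hahn--Banach surjectivity of the restriction map, and the packaging already provided by Corollary~\ref{lemma}. Accordingly I would treat the two assertions separately.

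For the first assertion, that $\mA^\ast/\W(\mA)$ has $\ell^\infty(\eta)$ as a quotient, I would start from the linear isomorphism (isomorphic embedding) $\mathcal{J}\colon \ell^\infty(\eta)\to \langle X\rangle^\ast/\restr{\W(\mA)}{\langle X\rangle}$ furnished by Theorem~\ref{ttr}, and turn it around into a quotient map. The key structural fact is that $\ell^\infty(\eta)$ is a $1$-injective Banach space (it is $C(K)$ for an extremally disconnected compact $K$), so the bounded inverse $\mathcal{J}^{-1}$, initially defined only on the closed subspace $\mathcal{J}(\ell^\infty(\eta))$, extends to a bounded operator $P\colon \langle X\rangle^\ast/\restr{\W(\mA)}{\langle X\rangle}\to \ell^\infty(\eta)$; since $P\circ\mathcal{J}$ is the identity, $P$ is onto. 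Independently, I would observe that the restriction map $f\mapsto \restr{f}{\langle X\rangle}$ is a norm-decreasing linear map of $\mA^\ast$ \emph{onto} $\langle X\rangle^\ast$ (surjectivity being a direct consequence of Hahn--Banach), and that it carries $\W(\mA)$ into $\restr{\W(\mA)}{\langle X\rangle}$ by the very definition of the latter. Composing it with the quotient map and factoring through $\W(\mA)$ therefore induces a bounded surjection of $\mA^\ast/\W(\mA)$ onto $\langle X\rangle^\ast/\restr{\W(\mA)}{\langle X\rangle}$. Composing this surjection with $P$ gives the desired bounded map of $\mA^\ast/\W(\mA)$ onto $\ell^\infty(\eta)$.

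For the second assertion I would simply verify the two hypotheses of Corollary~\ref{lemma} and quote it. The first hypothesis, that $\mA^\ast/\W(\mA)$ has $\ell^\infty(\eta)$ as a quotient, is exactly what the previous paragraph established. For the second, that $\ell^\infty(\eta)$ contains an isomorphic copy of $\mA^\ast$, I would invoke Lemma~\ref{hul}: writing $d(\mA)=\eta_0\le\eta$, that lemma yields an isometric embedding of $\mA^\ast$ into $\ell^\infty(\eta_0)$, and $\ell^\infty(\eta_0)$ embeds isometrically into $\ell^\infty(\eta)$ by extending functions by zero on $\eta\setminus\eta_0$. Both hypotheses in hand, Corollary~\ref{lemma} delivers that $\mA$ is enAr.

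Most of the steps are formal: the boundedness and surjectivity of the restriction map, and the well-definedness of the induced map on the quotient by $\W(\mA)$ (taking the closure of $\restr{\W(\mA)}{\langle X\rangle}$ if needed, which does not affect the preservation inequality of Theorem~\ref{ttr}). The one genuinely non-routine point is the use of the injectivity of $\ell^\infty(\eta)$ to convert the isomorphic \emph{embedding} of Theorem~\ref{ttr} into an actual \emph{quotient} map; this is where the special nature of $\ell^\infty(\eta)$, as opposed to an arbitrary Banach space, is indispensable, since an isomorphic copy of an injective space is automatically complemented and thus admits the bounded retraction $P$.
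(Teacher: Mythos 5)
Your proposal is correct and follows essentially the same route as the paper: the injectivity of $\ell^\infty(\eta)$ turns the embedding $\mathcal{J}$ of Theorem \ref{ttr} into a bounded surjection onto $\ell^\infty(\eta)$, the restriction map induces a bounded surjection $\mA^\ast/\W(\mA)\to \langle X\rangle^\ast/\restr{\mW(\mA)}{\langle X\rangle}$, and Corollary \ref{lemma} (via Lemma \ref{hul}) gives the enAr conclusion. You merely spell out a few details the paper leaves implicit, such as the Hahn--Banach surjectivity of restriction and the embedding $\ell^\infty(d(\mA))\hookrightarrow\ell^\infty(\eta)$ when $d(\mA)\leq\eta$.
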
\begin{proof}
The isomorphism of Theorem \ref{ttr} and the injectivity of   $\ell^\infty(\eta)$ as a Banach space (see for example  \cite[Proposition 2.5.2]{alkal}), give a linear surjective map $\mathcal{S}\colon  \langle X\rangle^\ast/\restr{\mW(\mA)}{\langle X\rangle}\to \ell^\infty(\eta)$ and the restriction map $\mA^\ast \to \langle X\rangle^\ast$ induces a surjective bounded linear map $ {\small \mathcal{R}\colon \dfrac{\mA^\ast}{\W(\mA)}\to  \dfrac{ \langle X\rangle ^\ast }{ {\restr{\mW(\mA)}{\langle X\rangle}}}}$.

The composition of  $\mathcal{S}$ with $\mathcal{R}$  shows that $\mA^\ast/\W(\mA)$ has $\ell^\infty(\eta)$ as a quotient.

Corollary \ref{lemma} proves then the last statement.\end{proof}

\section{Algebras with bai's and algebras with TI's}

 Our first application of Theorem \ref{ttr} involves bounded approximate identities. In particular, we obtain an improvement of the main result of  \"Ulger's inspiring paper  \cite{ulger2}, see  Theorem \ref{unify}.

\begin{definition}Let  $\mA$ be a Banach algebra. We say that  a bounded net $\{e_\alpha\colon \alpha\in \Lambda\}$ is a \emph{ bounded  approximate identity} (bai for short) if  \[\lim_\alpha\norm{a e_\alpha-a}=\lim_\alpha\norm{e_\alpha a-a}=0\quad\text{
for each}\quad a \in \mA.\]
\end{definition}

We now see that bai's induce sets that approximate triangles.

\begin{theorem} \label{unify1} Let   $\mA$ be a Banach algebra, which contains a bai  $\{a_\alpha\colon \alpha\in\Lambda\}$  of true cardinality $\eta$ which is an $\ell^1(\eta)$-base.
\begin{enumerate}
\item  Then there is a linear bounded map of $\mA^\ast/\W(\mA)$ onto $\ell^\infty(\eta)$.
\item In particular, $\mA$ is non-Arens regular.
\item If in addition $d(\mA)\leq \eta$,  then $\mA$ is enAr.
\end{enumerate}
\end{theorem}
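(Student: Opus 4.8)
The plan is to realize all the hypotheses of Theorem \ref{ttr} with the bai playing every role: the families $A$ and $B$, as well as the two approximating sets $X_1$ and $X_2$, will all be extracted from the given bai. The one structural difficulty to anticipate is that condition (i) of Theorem \ref{ttr} insists that $X_1$ and $X_2$ be \emph{disjoint}, whereas the right and left approximate-identity limits that feed the upper and lower triangles both point back to the same net. The remedy is to break the bai into two cofinal halves before building the two triangles.

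First I would use van Douwen's Lemma \ref{VD} to split $\Lambda$ into two disjoint cofinal subsets $\Lambda_1,\Lambda_2$ of true cardinality $\eta$, and put $A=\{a_\alpha\colon\alpha\in\Lambda_1\}$ and $B=\{a_\beta\colon\beta\in\Lambda_2\}$; both are bounded (a bai is bounded) and meet the cofinality and true-cardinality requirements $\mathrm{tr}|\Lambda_1|=\mathrm{tr}|\Lambda_2|=\eta$. For the upper triangle I would take $\mathcal{U}_1=\{(\alpha,\beta)\in\Lambda_1\times\Lambda_2\colon\alpha\prec\beta\}$, which is vertically cofinal since $\Lambda_2$ is cofinal, and set $x_{\alpha\beta}=a_\alpha$; the right approximate-identity property, read along the cofinal subnet $\Lambda_2$, gives $\lim_\beta\|x_{\alpha\beta}-a_\alpha b_\beta\|=\lim_\beta\|a_\alpha-a_\alpha a_\beta\|=0$ for each $\alpha$, so $X_1=\{a_\alpha\colon\alpha\in\Lambda_1\}$ approximates segments in $T_{AB}^{u}$. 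Symmetrically, with $\mathcal{U}_2=\{(\alpha,\beta)\in\Lambda_1\times\Lambda_2\colon\beta\prec\alpha\}$ (horizontally cofinal) and $x_{\alpha\beta}=a_\beta$, the left approximate-identity property gives $\lim_\alpha\|a_\beta-a_\alpha a_\beta\|=0$, so $X_2=\{a_\beta\colon\beta\in\Lambda_2\}$ approximates segments in $T_{AB}^{l}$.

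It then remains to verify the three hypotheses of Theorem \ref{ttr}. For (i): $X=X_1\cup X_2$ is a subset of the bai, hence inherits the $\ell^1(\eta)$-base constant $K$ and the bai bound $M$, and it has cardinality $\eta$; it is disjoint because $\Lambda_1\cap\Lambda_2=\emptyset$ and distinct indices yield distinct members of an $\ell^1$-base. Condition (ii) is the content of the previous paragraph. For (iii), $x_{\alpha\beta}=a_\alpha$ depends only on the first index, making $X_1$ horizontally injective, whereas $x_{\alpha\beta}=a_\beta$ depends only on the second, making $X_2$ vertically injective — the ``vice versa'' alternative. Theorem \ref{ttr} now yields the isomorphism onto $\langle X\rangle^\ast/\restr{\W(\mA)}{\langle X\rangle}$ and with it the non-Arens regularity of (2), while Corollary \ref{corollary} provides the bounded surjection of $\mA^\ast/\W(\mA)$ onto $\ell^\infty(\eta)$ in (1) and, when $d(\mA)\le\eta$, the extreme non-Arens regularity of (3). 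The genuine obstacle is the one flagged at the outset — keeping $X_1$ and $X_2$ disjoint, which forces the van Douwen split rather than reusing the whole bai for both triangles — together with the minor care needed to ensure the two approximate-identity limits persist after restricting the net to the cofinal halves $\Lambda_1$ and $\Lambda_2$.
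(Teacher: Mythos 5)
Your proposal is correct and follows essentially the same route as the paper: van Douwen's Lemma \ref{VD} to split the bai's index set into two disjoint cofinal halves of true cardinality $\eta$, the assignments $x_{\alpha\beta}=a_\alpha$ on the upper triangle and $x_{\alpha\beta}=a_\beta$ on the lower one (justified by the right and left approximate-identity limits), the same injectivity observation from distinctness of $\ell^1$-base elements, and the conclusion via Theorem \ref{ttr} and Corollary \ref{corollary}. Nothing essential differs.
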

\begin{proof}
By Lemma \ref{VD} we can assume that $\Lambda=\Lambda_1\cup \Lambda_2$ with $\Lambda_1\cap \Lambda_2=\emptyset$ and with both subsets $\Lambda_1$ and $\Lambda_2$ cofinal in $\Lambda$  of true cardinality $\eta$. Put $A=\{a_\alpha\colon \alpha\in \Lambda_1\}$ and $B=\{a_\alpha \colon \alpha \in \Lambda_2\}$.
 For each $\alpha\in \Lambda_1$ and $\beta \in \Lambda_2$ with $\alpha\preceq \beta$, define
 $x_{\alpha\beta}=a_\alpha$. For each $\alpha\in \Lambda_1$ and $\beta \in \Lambda_2$ with $\beta\preceq \alpha$, define
 $x_{\alpha\beta}=a_\beta$.
Then the sets   \[X_1=\{x_{\alpha\beta}\colon \alpha\in \Lambda_1,\;\beta \in \Lambda_2, \beta \succ\alpha \}\;\text{ and}\; X_2=\{x_{\alpha\beta}\colon \alpha\in \Lambda_1,\;\beta \in \Lambda_2, \alpha\succ\beta \}\]  approximate segments in  $T_{AB}^u$
and  $T_{AB}^l$, respectively.
To see this  we observe  that,  for a given $\alpha\in \Lambda_1,$   we can find $\beta_\alpha\in \Lambda_2$ with $\beta_\alpha \succeq \alpha$, and so the approximate identity property yields
 \[0=\lim_\beta \|a_\alpha-a_\alpha a_\beta\|=\lim_{\overset{\ \beta\in \Lambda_2}{\beta\succeq \beta_\alpha}}\|a_\alpha-a_\alpha a_\beta\| = \lim_{\beta\in \Lambda_2}\norm{x_{\alpha\beta}-a_\alpha a_\beta}.\] A similar observation shows that, for each $\beta\in \Lambda_2$,
\[\lim_{\alpha\in \Lambda_1} \|x_{\alpha \beta}-a_\alpha a_\beta\|=0.\]
The two first conditions of Theorem \ref{ttr} are then satisfied.

Since $a_\alpha\neq a_{\alpha^\prime}$ when $\alpha\neq  \alpha^\prime$, $X_1$  is horizontally injective and $X_2$ is vertically injective, and so Condition (iii) of Theorem \ref{ttr} holds also. The  theorem then follows from Corollary \ref{corollary}.
\end{proof}

The proof of next lemma  is modelled on  \cite[Lemma 3.2]{ulger2} and the proof of \cite[Theorem 3.3]{ulger2}.
Since  we are assuming neither Arens regularity nor weak sequential completeness of the algebra, as done in \cite{ulger2}, a full proof is provided.

\begin{lemma} \label{ulger} Let $\mA$ be a infinite dimensional Banach algebra with a bai and a separable subspace $S$.  Then $\mA$ contains a separable closed subalgebra  containing $S$ and  a sequential bai. If $\mA$ is non-unital, then this subalgebra can be chosen also non-unital.
\end{lemma}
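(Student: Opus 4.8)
The plan is to build, by a countable recursion, an increasing chain of countable $\mathbb{Q}+i\mathbb{Q}$-subalgebras of $\mA$ whose union $\mathcal{D}$ is dense in the subalgebra we want, while simultaneously extracting from the given bai $\{e_\alpha\}$ (say bounded by $M$) a \emph{sequence} $(f_n)$ that acts as a two-sided approximate identity on $\mathcal{D}$. I would start with a countable dense subset $Y_0$ of $S$ and let $B_0$ be the countable subalgebra it generates over $\mathbb{Q}+i\mathbb{Q}$. At stage $n$, having $B_{n-1}$ and $f_1,\dots,f_{n-1}$, I would exploit the defining property of the bai: for the finitely many elements currently scheduled for attention, directedness lets me intersect finitely many tails and find a whole tail $\{e_\alpha\colon\alpha\succeq\alpha^\ast\}$ of indices for which $\norm{g\,e_\alpha-g}<1/n$ and $\norm{e_\alpha g-g}<1/n$ hold simultaneously. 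I pick $f_n$ from this tail, let $B_n$ be the subalgebra generated by $B_{n-1}\cup\{f_n\}$, and feed its new elements into the schedule.

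The bookkeeping must guarantee that every element of the union $\mathcal{D}=\bigcup_n B_n$ is eventually among the finitely many elements treated at stage $n$; a standard diagonal enumeration (via a bijection $\N\to\N\times\N$, so that each element of the growing algebra acquires a finite label and is thereafter attended to with tolerance $1/n\to0$) achieves this. Then for every $d\in\mathcal{D}$ one has $\norm{d f_n-d}\to0$ and $\norm{f_n d-d}\to0$, and since $(f_n)$ is bounded by $M$ this approximate-identity property passes to $\mathcal{B}:=\overline{\mathcal{D}}$ by the usual $\varepsilon/3$ estimate. Thus $\mathcal{B}$ is a separable closed subalgebra (the closure of a subalgebra is a subalgebra), it contains $S$ because $\overline{Y_0}=\overline{S}\supseteq S$, and $(f_n)\subseteq\mathcal{B}$ is a sequential bai, as required.

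For the final assertion, suppose $\mA$ is non-unital. Then the bai cannot converge in norm, since a norm limit of a bai would be a unit; equivalently $\{e_\alpha\}$ is not Cauchy, so there is $\delta_0>0$ such that for every $\alpha_0$ there are $\beta,\gamma\succeq\alpha_0$ with $\norm{e_\beta-e_\gamma}\ge\delta_0$. The point I would exploit is that the admissible index set at each stage is a \emph{tail}, and non-Cauchyness does pass to tails: inside the tail from which $f_n$ is chosen I can find two elements $\delta_0$ apart, hence one of them at distance $\ge\delta_0/2$ from $f_{n-1}$; choosing $f_n$ to be that one keeps $\norm{f_n-f_{n-1}}\ge\delta_0/2$ while respecting the approximation constraints. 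Then $(f_n)$ is not Cauchy, so it has no limit; but if $\mathcal{B}$ had a unit $u$ the bai relation would force $f_n=u f_n\to u$, a contradiction, so $\mathcal{B}$ is non-unital. I expect the main obstacle to be the diagonalization of the first two paragraphs: because adjoining each $f_n$ enlarges the algebra, the set on which $(f_n)$ must eventually behave as an identity keeps growing, so the schedule has to be arranged to make both processes converge together; reconciling this schedule with the spreading selection of the non-unital case (through the tail observation) is the delicate point.
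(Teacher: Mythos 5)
Your proof is correct, but it diverges from the paper's in both halves. For the first assertion the paper simply cites \"Ulger's Lemma 3.2 of \cite{ulger2}; you re-prove it from scratch with the standard recursion (countable $\mathbb{Q}+i\mathbb{Q}$-subalgebras, tails of the net obtained by directedness, diagonal scheduling, $\varepsilon/3$ passage to the closure), which is fine and self-contained. The real difference is in the non-unital part. The paper argues by contradiction through an iterated construction: if the separable subalgebra $\mB_1$ is unital with unit $e_1$, it picks $x_1\in\mA\setminus\mB_1$ on which $e_1$ fails to act as an identity, encloses $\{x_1\}\cup\mB_1$ in a larger separable subalgebra with sequential bai, and repeats; if unitality persists forever it obtains distinct idempotents $(e_n)$ with $e_ne_m=e_{\min\{n,m\}}$, shows these form a bai for the closed span of $\{e_n\colon n\in\N\}\cup\mB_1$, and rules out a unit there because $\norm{e-e_n}\ge 1$ for the idempotent $e-e_n$. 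You instead build non-unitality directly into the selection of the sequential bai: since $\mA$ is non-unital the original bai is not norm-Cauchy (a norm limit would be a unit), this non-Cauchyness survives on every tail, and a spreading selection keeps $\norm{f_n-f_{n-1}}\ge\delta_0/2$, so that $(f_n)$ cannot converge, whereas a unit $u$ of $\mathcal{B}$ would force $f_n=uf_n\to u$. All the steps check out (in particular the tail on which the stage-$n$ constraints hold does contain two elements $\delta_0$ apart, and there is no genuine conflict between the scheduling and the spreading choice). Your route avoids the idempotent machinery and the repeated invocation of the separable-subalgebra lemma, at the cost of having to carry the refined selection through the recursion; the paper's route is modular in that it treats the first part as a black box and only post-processes its output.
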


\begin{proof}
The first part of the lemma is proved in \cite[Lemma 3.2]{ulger2}. As done there, denote this subalgebra by $\mB_1$ and the sequential bai in $\mB_1$
by $(a_n)$.
If $\mB_1$ is non-unital, then the second part of the claim follows too.
Otherwise, let $e_1$ be the unit in $\mB_1$ and note that $e_1$ is the norm limit of $(a_n)$ (since $\lim_n\|a_n-e_1\|=\lim_n\|a_ne_1-e_1\|=0$), and so $\norm{e_1}$ is bounded by the bound of $(a_n)$.
Since $\mA$ is non-unital, we may pick $x_1\in \mA\setminus \mB_1$ such that $x_1e_1\ne x_1$ or $e_1x_1\ne x_1$, and consider the set $\{x_1\}\cup \mB_1.$
Then again by
\cite[Lemma 3.2]{ulger2}, this set is contained in a separable closed subalgebra $\mB_2$ which has a sequential bai.
If $\mB_2$ is non-unital, we are done. Otherwise, we let $e_2$ be the unit in $\mB_2$ and note that $e_2\ne e_1.$ Then pick $x_2\in \mA\setminus \mB_2$  such that $x_2e_2\ne x_2$ or $e_2x_2\ne x_2$,
  consider the set
$\{x_2,e_2\}\cup\mB_1$,
and repeat the process. If the subalgebras keep being unital at every stage, we obtain a sequence of distinct idempotents $(e_n)$ in $\mA$
bounded by the bound of the original bai and satisfying \[e_ne_m=e_me_n=e_{\min\{n,m\}}\quad\text{for every}\quad n,m\in\N.\]
Note that this implies that the idempotents
 $e_n$, $n\in\N$, are linearly independent.
Let now $\mB$ be  the closed linear span of $\{e_n:n\in\N\}\cup\mB_1$. Since $be_n=e_nb=b$ for every $b\in \mB_1$ and $n\in\N,$
$\mB$ is a subalgebra of $\mA$. Moreover, if  $x$ is in the linear span of $\{e_n:n\in\N\}\cup\mB_1$, then $x=b+\sum_{i=1}^mc_ie_i$ for some $b\in \mB_1$ and some $m\in\N$, and  \[xe_n=e_nx=x\quad\text{
 for every}\quad n\ge m,\] which implies that  $(e_n)$ is a bai for $\mB.$

Suppose finally that  the subalgebra $\mB$ has a unit $e$, say.
Then, $(e_n)$ being a bai would converge in norm to the unit $e$. But
this is not possible, since $e-e_n$ being an idempotent makes $\|e-e_n\|\ge1$ for every $n\in\N.$
Therefore, the  subalgebra $\mB$ cannot have a unit, as required for our claim.
\end{proof}

Statement (ii) in the following theorem was proved in \cite[Theorem 3.3]{ulger2}.

\begin{theorem} \label{unify} Let    $\mA$ be a Banach algebra that contains a  Banach algebra $\mB$, which is
non-unital, weakly sequentially complete and has a bai.
\begin{enumerate}
\item  There is a linear bounded  map from    the quotient space $\mA^*/\mW(\mA)$ onto  $\ell^\infty$.
\item In particular, $\mA$ is non-Arens regular.
\item If $\mA$ is, in addition, separable, then $\mA$ is enAr.
\end{enumerate}
\end{theorem}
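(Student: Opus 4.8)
The plan is to reduce the problem to a countable, sequential situation inside $\mB$ and then feed an $\ell^1$-base manufactured from a bai into Theorem \ref{ttr}. First I would record that $\mB$ must be infinite dimensional: a finite-dimensional algebra with a bounded approximate identity has, by compactness of bounded sets, a norm-convergent subnet whose limit is a unit, contradicting non-unitality. Hence Lemma \ref{ulger} applies to $\mB$, and, choosing any separable subspace (say $\{0\}$), it produces a \emph{separable, non-unital} closed subalgebra $\mB_0\sub\mB$ carrying a \emph{sequential} bai $(a_n)$. I would then note that $\mB_0$ inherits weak sequential completeness from $\mB$: being norm-closed and convex it is weakly closed, so a weakly Cauchy sequence in $\mB_0$ converges weakly in $\mB$ to a point that must already lie in $\mB_0$.

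The heart of the argument is to extract an $\ell^1(\omega)$-base from $(a_n)$. I would first show that $(a_n)$ has \emph{no weakly convergent subsequence}. Indeed, if $a_{n_k}\to a$ weakly with $a\in \mB_0$, then for every $b\in\mB_0$ the maps $x\mapsto xb$ and $x\mapsto bx$ are bounded, hence weak-to-weak continuous, so $a_{n_k}b\to ab$ and $ba_{n_k}\to ba$ weakly; but $a_{n_k}b\to b$ and $ba_{n_k}\to b$ in norm by the bai property, whence $ab=ba=b$ for all $b$ and $a$ is a unit of $\mB_0$, a contradiction. By Rosenthal's $\ell^1$-theorem, $(a_n)$ therefore has a subsequence that is either weakly Cauchy or equivalent to the unit-vector basis of $\ell^1$; weak sequential completeness rules out the first alternative, since a weakly Cauchy sequence would converge weakly. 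So some subsequence is equivalent to the $\ell^1$-basis, and, as the norm of $\mB_0$ is the restriction of that of $\mA$, after relabelling we obtain a sequential bai $(a_n)$ for $\mB_0$ that is an $\ell^1(\omega)$-base in $\mA$.

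With this $\ell^1(\omega)$-base in hand I would finish exactly as in the proof of Theorem \ref{unify1}, but taking $\mA$ (not $\mB_0$) as the ambient algebra of Theorem \ref{ttr}. Splitting $\N$ into two disjoint infinite cofinal subsets $\Lambda_1,\Lambda_2$, each of true cardinality $\omega$, I set $A=\{a_n:n\in\Lambda_1\}$, $B=\{a_n:n\in\Lambda_2\}$ and define $X_1,X_2$ from $(a_n)$ as there. The bai relations $\lim_\beta\|a_\alpha - a_\alpha a_\beta\|=0$ and $\lim_\alpha\|a_\beta-a_\alpha a_\beta\|=0$ hold because the products are computed in $\mB_0\sub\mA$, so $X_1,X_2$ approximate the upper and lower triangles $T^u_{AB},T^l_{AB}$, while the injectivity Condition (iii) follows from the distinctness of the $a_n$. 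Thus the hypotheses of Theorem \ref{ttr} are met with $\eta=\omega$, and Corollary \ref{corollary} yields a bounded linear surjection $\mA^*/\W(\mA)\to\ell^\infty(\omega)=\ell^\infty$, which is (i), together with the non-Arens regularity of $\mA$ asserted in (ii). For (iii), if $\mA$ is separable then $d(\mA)=\omega=\eta$, so the last clause of Corollary \ref{corollary} gives that $\mA$ is enAr.

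I expect the main obstacle to be the extraction step of the second paragraph: one must verify that non-unitality genuinely forbids \emph{every} weakly convergent subsequence (not merely weak convergence of the whole net) and then combine this cleanly with Rosenthal's theorem and weak sequential completeness. A secondary point requiring care is that all norms, products and triangle approximations survive the passage to the closed subalgebra $\mB_0$, so that the resulting $\ell^1$-base and the triangle conditions are genuinely statements about the ambient algebra $\mA$ rather than about $\mB_0$ alone.
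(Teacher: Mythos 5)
Your proposal is correct and follows essentially the same route as the paper: apply Lemma \ref{ulger} to obtain a separable non-unital subalgebra with a sequential bai, show that any weak limit of a subsequence would be a unit (so, via weak sequential completeness and Rosenthal's $\ell^1$-theorem, some subsequence is an $\ell^1$-base), and then feed this into the triangle machinery of Theorem \ref{unify1}/Theorem \ref{ttr} and Corollary \ref{corollary}. Your added remarks --- that $\mB$ is automatically infinite dimensional, that $\mB_0$ inherits weak sequential completeness, and that the triangle approximations are statements in the ambient algebra $\mA$ --- are correct refinements of points the paper leaves implicit, not a different argument.
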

\begin{proof} By Lemma \ref{ulger}, thee is a separable non-unital subalgebra $\mB_1$
of  $\mB$ which admits a sequential  bai $\{x_n\colon n\in \N\}$.
 This sequence cannot have any weak Cauchy subsequence  because, otherwise,  its  weak limit $e\in\mB_1^\bd$ would be the unit in $\mB_1$. This latter fact is   checked by observing that, since $\mB_1$ is weakly sequentially complete, $e\in \mB_1$ and
\[\begin{split}\lim_n\langle x_na,f\rangle=\lim_n \langle x_n, af\rangle &= \langle e,af\rangle= \langle ea,f\rangle\;\text{ and }\;\\&
\lim_n\langle ax_n,f\rangle=\lim_n \langle x_n, fa\rangle= \langle ae, f\rangle,\end{split}\] for any  $a\in \mB_1$ and $f\in \mB_1^*.$  This implies that  $\lim_n x_na=ea$ and $\lim_nax_n=ae$ weakly.
Since, on the other hand,  $
\{x_n\colon n\in \N\}$ being a bai implies that
$\lim_n x_na=a$ and $\lim_nax_n=a$ in norm, hence weakly, we conclude that $ea=ae=a$, for every $a\in \mB_1$, what makes  $\mB_1$ unital.

 Not  having any weak Cauchy subsequences, the sequence  $\{x_n\colon n\in \N\}$ must have    a subsequence
  $\{x_{n(k)}\colon k\in \N\} $ that is an $\ell^1$-base in $\mB$,  by Rosenthal's $\ell^1$-theorem (see \cite[Theorem 1]{rosenthal}).
Since $\{x_{n(k)}\colon k\in \N\}$ is a bai in $\mA$,   the rest of the proof follows from  Theorem \ref{unify1}.

\end{proof}

The second application of Theorem \ref{ttr} involves (weak) TI-nets.

\begin{definition} In a Banach algebra $\mA$, a bounded net $\{a_\alpha\colon \alpha \in \Lambda\}$ is a \emph{weak TI-net} if
\[\lim_\alpha\norm{ a_\alpha a_\beta-a_\alpha}=\lim_\alpha\norm{ a_\beta a_\alpha -a_\alpha }=   0\quad\text{for each}\quad \beta\in  \Lambda.\]
 \end{definition}

  If $\mA^*$ is a von Neumann algebra and we require that $\lim_\alpha\norm{ a_\alpha a -a_\alpha}=0$
 for every normal state $a$ of $\mA^*$, and not only for members of the net itself,  then we obtain the familiar concept of a \emph{TI-net}.
 Here    TI stands for topological invariance, the term was introduced by Chou \cite{chou82TIM} and is related to the notions of \emph{famille moyennante} of Lust-Picard \cite{lust81} and of  \emph{nets converging to invariance} of Day \cite{day}, as stated by Greenleaf \cite{green69}.

\begin{theorem} \label{TI} Let $\mA$ be a Banach algebra, $\eta$ be an infinite cardinal number and suppose that
$\mA$ contains a weak TI-net of true cardinality $\eta$, which is an $\ell^1(\eta)$-base.
\begin{enumerate}
\item  Then, there is a linear bounded map of $\mA^\ast/\W(\mA)$ onto $\ell^\infty(\eta)$.
\item In particular, $\mA$ is non-Arens regular.
\item  If in addition $d(\mA)=\eta$, then  $\mA$ is enAr.
\end{enumerate}
\end{theorem}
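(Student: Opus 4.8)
The plan is to reduce everything to Corollary \ref{corollary} by building, out of the weak TI-net, two families that approximate the upper and lower triangles of a suitable pair $A,B$, precisely as was done for bai's in Theorem \ref{unify1}. The one conceptually new point---and the place where I expect the main difficulty---is that the assignment of approximating elements must be the \emph{mirror image} of the bai case: for a bai the factor carrying the \emph{fixed} index survives in the limit, whereas for a weak TI-net it is the factor carrying the \emph{running} index that survives. Keeping this matching straight is the crux; reversing it (as the bai intuition would suggest) destroys the approximation.

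First I would write the net as $\{a_\alpha\colon\alpha\in\Lambda\}$ and apply van Douwen's Lemma \ref{VD} to split $\Lambda=\Lambda_1\cup\Lambda_2$ into two disjoint cofinal subsets of true cardinality $\eta$, setting $A=\{a_\alpha\colon\alpha\in\Lambda_1\}$ and $B=\{a_\beta\colon\beta\in\Lambda_2\}$; the cofinality and cardinality bookkeeping is identical to that in Theorem \ref{unify1}. I would then define
\[
x_{\alpha\beta}=a_\beta\ \text{ when }\ \beta\succ\alpha,\qquad x_{\alpha\beta}=a_\alpha\ \text{ when }\ \alpha\succ\beta,
\]
and let $X_1$, $X_2$ be the resulting sets, indexed by $\mathcal U_1=\{(\alpha,\beta)\colon\beta\succ\alpha\}$ and $\mathcal U_2=\{(\alpha,\beta)\colon\alpha\succ\beta\}$ respectively.

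The heart of the matter is the approximation condition, and here both equalities in the definition of a weak TI-net are needed. For the upper triangle I fix $\alpha$ and let $\beta$ run: the relation $\lim_\beta\norm{a_\alpha a_\beta-a_\beta}=0$ (this is the second equality in the weak TI definition, read with $a_\alpha$ fixed and $\beta$ running) shows that $X_1$ approximates segments in $T^{u}_{AB}$. For the lower triangle I fix $\beta$ and let $\alpha$ run: the relation $\lim_\alpha\norm{a_\alpha a_\beta-a_\alpha}=0$ (the first equality, with $a_\beta$ fixed) shows that $X_2$ approximates segments in $T^{l}_{AB}$. Since only products of members of the net enter these estimates, the \emph{weak} TI property---convergence tested merely against the net's own elements---is exactly what is required, and full TI is never invoked.

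Finally I would verify hypotheses (i)--(iii) of Theorem \ref{ttr}. For (i), $X=X_1\cup X_2$ is a subset of the (bounded) net, hence an $\ell^1(\eta)$-base with the same constant and bound; as the net is an $\ell^1(\eta)$-base its members are pairwise distinct, so $X_1\subseteq\{a_\beta\colon\beta\in\Lambda_2\}$ and $X_2\subseteq\{a_\alpha\colon\alpha\in\Lambda_1\}$ are disjoint, with $|X|=\eta$ by the count in Theorem \ref{unify1}. For (iii), distinctness gives that $x_{\alpha\beta}=a_\beta$ forces $X_1$ to be vertically injective and $x_{\alpha\beta}=a_\alpha$ forces $X_2$ to be horizontally injective. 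All three conditions then hold, and Corollary \ref{corollary} delivers the bounded surjection of $\mA^\ast/\W(\mA)$ onto $\ell^\infty(\eta)$ of part (i), the non-Arens regularity of part (ii), and, when $d(\mA)=\eta$, the extreme non-Arens regularity of part (iii).
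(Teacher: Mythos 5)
Your proof is correct and follows essentially the same route as the paper: partition $\Lambda$ via van Douwen's lemma, set $x_{\alpha\beta}=a_\beta$ on the upper triangle and $x_{\alpha\beta}=a_\alpha$ on the lower one (the ``mirror image'' of the bai case), verify the hypotheses of Theorem \ref{ttr}, and conclude via Corollary \ref{corollary}. Your injectivity assignment ($X_1$ vertically injective, $X_2$ horizontally injective) is in fact the accurate one for this choice of $x_{\alpha\beta}$ --- the paper's text states the reverse, apparently carried over from Theorem \ref{unify1}, but Theorem \ref{ttr}(iii) accepts either configuration, so nothing is affected.
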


\begin{proof}
Let $\{a_\alpha \colon \alpha \in \Lambda\}$ be a weak  TI-net in $\mA$. As with the bai's in Theorem
\ref{unify1}, weak TI-nets also approximate segments in the upper and lower triangles defined by them.   If one takes a partition of $\Lambda$, $\Lambda=\Lambda_1\cup \Lambda_2$, with both $\Lambda_1$ and $\Lambda_2$ cofinal and of true cardinality $\eta$, the only difference with the proof of Theorem \ref{unify1} is that we now define
 $x_{\alpha\beta}=a_\beta$ when $\alpha\preceq \beta$ and $x_{\alpha\beta}=a_\alpha$ when  $\beta\preceq \alpha$.
As before, the sets   \[X_1=\{x_{\alpha\beta}\colon \alpha\in \Lambda_1,\;\beta \in \Lambda_2, \alpha\prec \beta \}\;\text{ and}\;X_2=\{x_{\alpha\beta}\colon \alpha\in \Lambda_1,\;\beta \in \Lambda_2, \beta\prec \alpha \}\]   can be used again to   approximate segments in  $T_{AB}^u$
and  $T_{AB}^l$,  respectively. Since $a_\alpha\neq a_{\alpha^\prime}$ when $\alpha\neq  \alpha^\prime$, $X_1$  is horizontally injective and $X_2$ is vertically injective. The proof of this theorem then runs exactly along the same lines as that of Theorem \ref{unify1}.
\end{proof}

\section{Algebras in harmonic analysis}

 As already observed in \cite{ulger2},  most (if not all) of the known cases of non-Arens regular Banach algebra follow from
  Statement (ii) of Theorem \ref{unify1}. For an infinite locally compact group, the list includes the group algebra $L^1(G)$ \cite{young1}, the weighted group algebra $L^1(G,w)$ (for any weight when $G$ is either non-discrete or discrete and uncountable and for diagonally bounded weights when $G$ is discrete and countable  \cite{craw-young})  and the Fourier algebra $A(G)$ (when $G$ is either non-discrete or is a discrete group containing
an infinite amenable subgroup,  \cite{lau-wong} and \cite{forrest91}).
The list includes also the semigroup algebra $\ell^1(S)$ for cancellative semigroups and in general the weighted semigroup algebra
$\ell^1(S,w)$  whenever the  weight $w$  is diagonally bounded on an infinite subset of $S$
(see \cite{craw-young} and \cite{baker-rejali}).
Since  these algebras have bounded approximate identities and are preduals of von Neumann algebras (they are therefore weakly sequentially complete) Theorem \ref{unify} can be applied to all of them. In fact,  with preduals  of von Neumann algebras an alternative approach is possible. We intend to address it   in a forthcoming  paper \cite{FGVN}.

In the rest of the paper, we focus on three other important algebras
 defined in harmonic analysis, the 1-Segal Fourier algebra $S^1A(G)$, the  Lebesgue-Fourier algebra $LA(G)$ and the  Fig\`a-Talamanca Herz algebra $A_p(G)$.

The  1-Segal Fourier algebra   and the Lebesgue-Fourier algebra. Both algebras are built on the same Banach space, $ L^1(G)\cap  A(G)$
with norm $\|f\|_S=\|f\|_1+\|f\|_{A(G)}.$
With convolution product as multiplication, this Banach space is made into a Banach algebra, known as the \emph{Lebesgue-Fourier algebra} and denoted by  $LA(G)$.
 It is  a Segal algebra with respect to $L^1(G)$, see \cite[Proposition 2.2]{ghahlau1}.

 If pointwise product is used another Banach algebra is obtained. Borrowing the terminology from \cite{forrsprowood07},  we shall refer to this algebra as the \emph{1-Segal Fourier algebra} and denote it by $S^1A(G)$. As shown in  \cite[Proposition 2.5]{ghahlau1} $S^1A(G)$ is a Segal algebra with respect to $A(G)$. Note that in this latter reference both the 1-Segal Fourier algebra and the Lebesgue-Fourier algebra are referred to as the Lebesgue-Fourier algebra and denoted by $LA(G)$.

The other algebra we will be concerned with will be the Fig\`a-Talamanca Herz algebra $A_p(G)$, where $1<p<\infty$,
  is the algebra of all functions $u\in \cnaught$ which have a series expansion
\[u=\sum_{i=1}^\infty g_i\ast \check f_i,\quad f_i\in L^p(G), g_i\in L^q(G), (1/p+1/q=1)\] (where $\check f(x)=f(x^{-1})$) with the property that $\sum_{i=1}^\infty \|f_i\|_p\|g_i\|_q <\infty.$ The norm in $A_p(G)$ is then given by
\[\|u\|_{A_p(G)}=\inf\left\{\sum_{i=1}^\infty \|f_i\|_p\|g_i\|_q: u=\sum_{i=1}^\infty g_i\ast \check f_i,\quad f_i\in L^p(G), g_i\in L^q(G)\right\}.\]
Regarding the group algebra $L^1(G)$ as an algebra of convolution operators on $L^p(G)$, its closure
with respect to the weak operator topology in $\mB(L^p(G))$ (the bounded operators on $L^p(G)$) is $PM_p(G),$
the so-called space of $p$-pseudo-measures on $G$. $PM_p(G)$  may be identified with the Banach dual of $A_p(G).$
When $p=2,$ $A_2(G)$ is the Fourier algebra $A(G)$ of $G$ and $PM_2(G)$ is the group von Neumann algebra $VN(G)$ of $G.$

Although it is not known whether the  algebra $A_p(G)$ is
weakly sequentially complete in general,
Forrest managed to prove that $A_p(G)$ is non-Arens regular when  $G$ is non-discrete, see \cite[Theorem 3.2]{forrest91}. The key in the proof is, however,  still weak sequential completeness,  in this case of the subalgebra $A_p^E(G)$ of $A_p(G)$ for compact subsets $E\subseteq G$ (see  \cite[Lemma 18]{granirer87} or the proof of Theorem \ref{ArNotAir1} for the definition).
Based on this remarkable result of Granirer,  we shall improve Forrest's theorem in Theorem \ref{ArNotAir2}. We rely on weak TI-sequences to  see that
the quotient space $PM_p(G)/\W(A_p(G))$ is non-separable, and in particular, $A_p(G)$ is enAr when $G$ is second countable  and non-discrete.

Non-discreteness of $G$ is essential for the existence of weak TI-sequences and for the definition of the weakly sequentially complete subalgebra $A_p^E(G)$ of $A_p(G)$. If $G$ is discrete (and infinite), the first difficulty can be overcome  if $G$ is an  amenable group, for
weak  TI-sequences can be replaced by bounded approximate identities, always available in this case  \cite{herz}.
For the second difficulty, that only arises when $p\neq 2$, we will have to content ourselves to argue as  in \cite[Proposition 3.5]{forrest91} assuming that $A_p(G)$ is weakly sequentially complete (or $A_p(H)$ for a subgroup $H$ of $G$). It remains an open problem whether, for $p\neq 2$,  $A_p(G)$ contains a non-trivial weakly sequentially complete subalgebra when $G$ is discrete.

\subsection{A 'canonical' TI-net for convolutions}
We construct here a weak TI-net that works both in $S^1A(G)$ and in $A_p(G)$, for any non-discrete locally compact group

We start with an elementary fact which makes clear that a neighbourhood base  with the properties required in the subsequent  Lemma \ref{TINET} is always available.

 \begin{lemma}\label{nbhd} Let $G$ be a locally compact group and  $U$ a relatively compact neighbourhood of the identity. Then, there is another neighbourhood of the identity $V$ with $\overline{V}\subseteq U$ and $\lambda_{\g}(U)\leq 2\lambda_{\g} (V)$, where $\lambda_{\g}$ denotes the left Haar measure on $G$. \end{lemma}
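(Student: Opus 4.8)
The plan is to derive this from the regularity of Haar measure together with the standard fact that in a locally compact Hausdorff space a compact set can be sandwiched between an open set and its (compact) closure inside any prescribed open neighbourhood. The factor $2$ plays no special role here; the same argument yields any constant larger than $1$ in place of $2$.

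First I would observe that $\lambda_{\g}(U)<\infty$, since $U$ is relatively compact and hence $\lambda_{\g}(U)\le \lambda_{\g}(\overline{U})<\infty$. By inner regularity of Haar measure on the open set $U$ I would choose a compact set $K\subseteq U$ with $\lambda_{\g}(K)>\tfrac12\lambda_{\g}(U)$; this is possible because the supremum of $\lambda_{\g}(K)$ over compact $K\subseteq U$ equals $\lambda_{\g}(U)$, and this value is finite. The set $K$ furnished by regularity need not contain the identity, so I would pass to $C=K\cup\{e\}$, which is still compact and still contained in $U$.

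Next, using local compactness of $G$ (the sandwiching step underlying Urysohn's lemma for locally compact spaces), I would find an open set $V$ with $C\subseteq V\subseteq \overline{V}\subseteq U$ and $\overline{V}$ compact. Then $V$ is an open set containing $e$, hence a neighbourhood of the identity, and $\overline{V}\subseteq U$ as required. Finally, monotonicity of $\lambda_{\g}$ gives $\lambda_{\g}(V)\ge \lambda_{\g}(K)>\tfrac12\lambda_{\g}(U)$, that is $\lambda_{\g}(U)<2\lambda_{\g}(V)$, which is even marginally stronger than the stated inequality.

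The only genuinely delicate point is bookkeeping rather than depth: the compact set delivered by inner regularity carries no information about where the identity sits, so one must explicitly adjoin $e$ to it before sandwiching, in order to guarantee that the resulting $V$ is truly a neighbourhood of the identity and not merely a large open subset of $U$. Everything else is routine invocation of the regularity and local-compactness properties already available for $(G,\lambda_{\g})$.
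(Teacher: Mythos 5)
Your proof is correct and follows essentially the same route as the paper: inner regularity of Haar measure yields a compact $K\subseteq U$ carrying at least half the measure of $U$, which is then fattened to an open neighbourhood $V$ of the identity with $\overline{V}\subseteq U$. The only difference is cosmetic --- the paper takes $V=KW_2$ for a small neighbourhood $W_2$ of the identity chosen (via Hewitt--Ross, Theorem 4.10) so that $K\overline{W_2}\subseteq U$, whereas you invoke the purely topological sandwich lemma for compact sets in locally compact Hausdorff spaces; your explicit adjunction of $e$ to $K$ to guarantee that $V$ really is a neighbourhood of the identity is a careful touch that the paper's version leaves implicit.
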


 \begin{proof}
By regularity we can find $K \subseteq U$, compact, with $2\lambda_\g (K)\geq \lambda_\g(U)$. Apply \cite[Theorem 4.10]{hewiross1} to find a neighbourhood $W_1$ of the identity such that $KW_1\subseteq U$. If $W_2$ is another neighbourhood of the identity with $\overline{W_2}\subseteq W_1$, then $V=KW_2$ is the required neighbourhood. \end{proof}

\begin{lemma} \label{TINET} Let $\{U_\alpha \colon \alpha\in \Lambda\}$ be a base at the identity $e$ made of relatively compact symmetric open sets, such that for some $M>0,$ $\lambda_\g(U_\alpha)\leq M$ for every $\alpha\in \Lambda$.  Let  $\Lambda$ be  directed by set inclusion  $\beta \succeq \alpha$ if and only if  $U_\beta\subseteq U_\alpha$.
 Choose for each $\alpha \in \Lambda$, a neighbourhood $V_\alpha$ of the identity with  $\overline{V_\alpha}\subseteq U_{\alpha}$ and  $\lambda_{\g}(U_\alpha)\leq2 \lambda_{\g}(V_{\alpha})$. Define, for each $\alpha \in \Lambda$, \[ \varphi_\alpha=\frac{1}{\lambda_{\g}(V_{\alpha})} \mathbbm{1}_{U_{\alpha}}\ast \mathbbm{1}_{V_\alpha}.\] Then $\left\{\varphi_\alpha \colon \alpha\in \Lambda\right\}$ is a weak TI-net in both algebras $A_p(G)$ and $S^1A(G)$. \end{lemma}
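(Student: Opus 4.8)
The plan is to verify the two defining requirements of a weak TI-net directly: boundedness of $\{\varphi_\alpha\}$ in each algebra, and, for every fixed $\beta$, that $\varphi_\alpha\varphi_\beta=\varphi_\alpha$ holds \emph{exactly} once $\alpha$ is large. Since the product in both $A_p(G)$ and $S^1A(G)$ is pointwise, hence commutative, the two one-sided conditions $\lim_\alpha\norm{\varphi_\alpha\varphi_\beta-\varphi_\alpha}=0$ and $\lim_\alpha\norm{\varphi_\beta\varphi_\alpha-\varphi_\alpha}=0$ coincide, so it suffices to treat one; and because the identity $\varphi_\alpha\varphi_\beta=\varphi_\alpha$ will be an equality of functions, the norm in which it is read is irrelevant and both algebras are settled simultaneously.

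First I would record the pointwise formula $\varphi_\alpha(x)=\lambda_{\g}\bigl(U_\alpha\cap xV_\alpha^{-1}\bigr)/\lambda_{\g}(V_\alpha)$, obtained by unwinding the convolution. From it three facts follow: $\varphi_\alpha$ is continuous and nonnegative; its support lies in $\overline{U_\alpha}\,\overline{V_\alpha}\subseteq\overline{U_\alpha}\,\overline{U_\alpha}$, which shrinks to $\{e\}$ because $\{U_\alpha\}$ is a base at $e$ and each $\overline{U_\alpha}$ is compact; and, crucially, $\varphi_\alpha\equiv 1$ on a whole neighbourhood $W_\alpha$ of $e$. For this last point, whenever $xV_\alpha^{-1}\subseteq U_\alpha$ one has $\varphi_\alpha(x)=\lambda_{\g}(xV_\alpha^{-1})/\lambda_{\g}(V_\alpha)=\lambda_{\g}(V_\alpha^{-1})/\lambda_{\g}(V_\alpha)$, and the set of such $x$ contains a neighbourhood of $e$ since $\overline{V_\alpha^{-1}}$ is a compact subset of the open set $U_\alpha$. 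The value $\lambda_{\g}(V_\alpha^{-1})/\lambda_{\g}(V_\alpha)$ equals $1$: this is automatic when $G$ is unimodular, and in general it is secured by taking each $V_\alpha$ symmetric while keeping $\overline{V_\alpha}\subseteq U_\alpha$ and $\lambda_{\g}(U_\alpha)\le 2\lambda_{\g}(V_\alpha)$, which the construction of Lemma \ref{nbhd} still permits. I would flag this as the delicate point: the proof needs $\varphi_\alpha$ to attain the value $1$ on a neighbourhood, not merely a value close to $1$, because the discrepancy analysed below is a fixed scalar multiple of $\varphi_\alpha$, whose norm does \emph{not} tend to $0$.

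Next comes boundedness. Writing $\varphi_\alpha=g\ast\check{f}$ with $g=\tfrac{1}{\lambda_{\g}(V_\alpha)}\mathbbm{1}_{U_\alpha}\in L^q(G)$ and $f=\mathbbm{1}_{V_\alpha}\in L^p(G)$ (so that $\check f=\mathbbm{1}_{V_\alpha}$ by symmetry of $V_\alpha$) exhibits $\varphi_\alpha$ as a single term of the defining series, whence $\norm{\varphi_\alpha}_{A_p(G)}\le \lambda_{\g}(V_\alpha)^{-1}\,\lambda_{\g}(U_\alpha)^{1/q}\,\lambda_{\g}(V_\alpha)^{1/p}$. Using $\lambda_{\g}(U_\alpha)\le 2\lambda_{\g}(V_\alpha)$ and $1/p+1/q=1$, the powers of $\lambda_{\g}(V_\alpha)$ cancel and the bound collapses to $2^{1/q}$, independently of $\alpha$. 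For $S^1A(G)$ the same factorisation with $p=q=2$ bounds the $A(G)$-part by $2^{1/2}$, while $\norm{\varphi_\alpha}_1=\lambda_{\g}(V_\alpha)^{-1}\norm{\mathbbm{1}_{U_\alpha}}_1\norm{\mathbbm{1}_{V_\alpha}}_1=\lambda_{\g}(U_\alpha)\le M$, giving $\norm{\varphi_\alpha}_S\le M+2^{1/2}$. This cancellation is the one genuinely computational ingredient, and it is exactly what the normalisation $1/\lambda_{\g}(V_\alpha)$ and the measure comparison of Lemma \ref{nbhd} are engineered to produce.

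Finally I would assemble the TI-net property. Fix $\beta$ and let $W_\beta$ be the neighbourhood of $e$ on which $\varphi_\beta\equiv 1$. Since $\{U_\alpha\}$ is a base at $e$ with $\overline{U_\alpha}$ compact, choose $\alpha_0$ with $\overline{U_{\alpha_0}}\,\overline{U_{\alpha_0}}\subseteq W_\beta$; then for every $\alpha\succeq\alpha_0$ we have $\supp\varphi_\alpha\subseteq\overline{U_\alpha}\,\overline{U_\alpha}\subseteq W_\beta$. On this support $\varphi_\beta\equiv 1$, so $\varphi_\beta\varphi_\alpha=\varphi_\alpha$ pointwise and the difference $\varphi_\beta\varphi_\alpha-\varphi_\alpha$ is the zero function; hence $\norm{\varphi_\beta\varphi_\alpha-\varphi_\alpha}=0$ in both algebras for all $\alpha\succeq\alpha_0$, giving $\lim_\alpha\norm{\varphi_\beta\varphi_\alpha-\varphi_\alpha}=0$. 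Combined with commutativity of the pointwise product and the uniform norm bounds above, this shows that $\{\varphi_\alpha\}$ is a weak TI-net in both $A_p(G)$ and $S^1A(G)$.
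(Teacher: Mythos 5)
Your proof is correct and follows essentially the same route as the paper's: the same single-term factorisation and $L^1$ computation for the uniform norm bounds (the paper obtains $2^{1/p}$ where you obtain $2^{1/q}$, an immaterial difference in which indicator is assigned to $L^p$), and the same mechanism for the TI property, namely that the fixed function is identically $1$ on the supports of all sufficiently remote $\varphi_\alpha$, so that the product identity $\varphi_\beta\varphi_\alpha=\varphi_\alpha$ holds exactly as functions and hence in every norm at once. The point you flag about $\lambda_{\g}(V_\alpha^{-1})$ versus $\lambda_{\g}(V_\alpha)$ is well taken: the paper's proof evaluates $\mathbbm{1}_{U_\alpha}\ast\mathbbm{1}_{V_\alpha}(s)$ as $\lambda_{\g}(U_\alpha\cap sV_\alpha)$ rather than $\lambda_{\g}(U_\alpha\cap sV_\alpha^{-1})$, which amounts to tacitly taking $V_\alpha$ symmetric, and your explicit symmetrisation (always available, e.g.\ by replacing $V_\alpha$ with $V_\alpha\cup V_\alpha^{-1}$, which stays inside the symmetric $U_\alpha$ and only increases measure) is indeed what is needed to make the plateau value exactly $1$, rather than $\lambda_{\g}(V_\alpha^{-1})/\lambda_{\g}(V_\alpha)$, when $G$ is not unimodular.
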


\begin{proof} It is clear that the net $(\varphi_\alpha)$ belongs to both algebras $A_p(G)$ and $S^1A(G)$.
To see that this net in  bounded in both norms we check that on the one hand
\begin{align*}
\norm{\varphi_\alpha}_{A_p(G)}&\leq \frac{1}{\lambda_{\g}(V_{\alpha})}
\norm{\mathbbm{1}_{U_{\alpha}}}_p
\norm{\mathbbm{1}_{V_{\alpha}}}_q\\
&\leq
\frac{1}{\lambda_{\g}(V_{\alpha})}\lambda_{\g}(U_\alpha)^{1/p}
\lambda_{\g}(V_\alpha)^{1/q}\\
&\leq
\frac{1}{\lambda_{\g}(V_{\alpha})}
\bigl(2\lambda_{\g}(V_\alpha)\bigr)^{1/p}
\lambda_{\g}(V_\alpha)^{1/q}=2^{1/p}.
\end{align*}
And then we observe   that
% by Young's inequality,
\[
\norm{\varphi_\alpha}_{1}=
\frac{1}{\lambda_{\g}(V_{\alpha})}
\norm{\mathbbm{1}_{U_\alpha} \ast \mathbbm{1}_{V_\alpha}}_1= \frac{1}{\lambda_{\g}(V_\alpha)} \lambda_{\g}(V_\alpha)
\lambda_{\g}(U_\alpha)
=\lambda_{\g}(U_\alpha)\leq M.\]
In particular, for $p=2$, this yields  $\|\varphi_\alpha\|_S\le M+2$ for every $\alpha\in \Lambda.$

As to the weak TI-net property, for each $\alpha \in \Lambda$ we choose $\beta(\alpha)\in \Lambda$ such that
\[U_{\beta(\alpha)}^2 V_\alpha \subseteq U_\alpha\]
(one may apply  \cite[Theorem 4.10]{hewiross1} for this).

Let now  $ \gamma \succeq\beta(\alpha)$. Then $V_\gamma U_\gamma \subseteq U_{\beta(\alpha)}^2$, so that
when $s\in V_\gamma U_\gamma$, we have \[sV_\alpha\subseteq U_{\beta(\alpha)}^2 V_\alpha\subseteq  U_\alpha,\] and so $\mathbbm{1}_{U_\alpha}\ast \mathbbm{1}_{V_\alpha}(s) =\lambda_{\g}(U_\alpha  \cap s  V_\alpha)=\lambda_{\g}(V_\alpha)$. Hence
\begin{align*}
\varphi_\gamma(s)\varphi_\alpha(s)=\varphi_\gamma (s) \mbox{ if } s \in V_\gamma U_\gamma.
\end{align*}
Since, obviously, $\varphi_\gamma(s)\varphi_\alpha(s)=\varphi_\gamma (s) =0, \mbox{ if } s \notin V_\gamma U_\gamma$, we conclude that $\varphi_\gamma\varphi_\alpha=\varphi_\gamma$ whenever $\gamma \succeq \beta(\alpha)$. The net is therefore a weak TI-net in both norms.
\end{proof}

\subsection{The 1-Segal Fourier algebras $S^1A(G)$}
    We first  need      a technical lemma.

    \begin{lemma}\label{inters}
    Every non-discrete locally compact group  possesses a sequence
    $\{U_n\colon n\in \N\}$ of neighbourhoods of
    the identity  such that $U=\bigcap\limits_{n<w}U_n$ has
     an empty interior.
    \end{lemma}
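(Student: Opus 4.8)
The plan is to produce the sequence using the left Haar measure $\lambda_\g$ on $G$, taking neighbourhoods whose measures shrink to $0$ and then arguing that their intersection, being $\lambda_\g$-null, cannot contain a non-empty open set. The two standard properties of Haar measure I will rely on are that $\lambda_\g$ is finite on relatively compact sets and strictly positive on non-empty open sets.

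First I would establish that $e$ admits neighbourhoods of arbitrarily small Haar measure; this is where non-discreteness enters. Fix a relatively compact open neighbourhood $W$ of $e$, so that $0<\lambda_\g(W)<\infty$. Since $G$ is non-discrete, $e$ is not isolated and $W$ is infinite, so for every $n\in\N$ one can pick $n$ distinct points $x_1,\dots,x_n\in W$. Using joint continuity of multiplication together with the Hausdorff property, I would then choose a symmetric open neighbourhood $V$ of $e$ small enough that the translates $x_1V,\dots,x_nV$ are pairwise disjoint and all contained in $W$; disjointness is arranged by taking $V$ with $V^2$ avoiding the finite set $\{x_i^{-1}x_j:i\neq j\}$, which does not contain $e$ (if $x_iv=x_jv'$ with $v,v'\in V$ then $x_i^{-1}x_j=v(v')^{-1}\in V^2$, a contradiction). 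Left-invariance of $\lambda_\g$ then yields $\lambda_\g(W)\ge\sum_{i=1}^{n}\lambda_\g(x_iV)=n\,\lambda_\g(V)$, so that $\lambda_\g(V)\le\lambda_\g(W)/n$. As $n$ is arbitrary, this shows $\inf\{\lambda_\g(V):V\text{ a neighbourhood of }e\}=0$.

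With this in hand I would choose, for each $n\in\N$, an open neighbourhood $U_n$ of $e$ with $\lambda_\g(U_n)<1/n$, and set $U=\bigcap_{n\in\N}U_n$. Then $\lambda_\g(U)\le\lambda_\g(U_n)<1/n$ for every $n$, whence $\lambda_\g(U)=0$. Finally, since every non-empty open subset of a locally compact group has strictly positive Haar measure, $U$ cannot contain any non-empty open set, i.e. $U$ has empty interior, which is exactly the desired conclusion.

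The only genuinely substantive step is the first one, producing neighbourhoods of arbitrarily small measure, and it is precisely there that the hypothesis of non-discreteness is used; the remainder is immediate from the positivity of $\lambda_\g$ on open sets. I note that this first step could alternatively be phrased as the fact that $\lambda_\g(\{e\})=0$ for non-discrete $G$ combined with outer regularity of Haar measure, but the explicit packing-of-disjoint-translates estimate above is self-contained and avoids invoking regularity theorems.
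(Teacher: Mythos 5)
Your proof is correct and follows essentially the same route as the paper's first proof of this lemma: produce neighbourhoods $U_n$ of Haar measure less than $1/n$, observe that $U=\bigcap_n U_n$ is $\lambda_{\g}$-null, and conclude that $U$ has empty interior since non-empty open sets have positive Haar measure. The only difference is that you obtain the arbitrarily small neighbourhoods from a self-contained disjoint-translates packing estimate rather than from $\lambda_{\g}(\{e\})=0$ plus regularity (a substitution you yourself flag, and which is perfectly valid); the paper additionally records a second, purely topological proof avoiding Haar measure altogether.
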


\begin{proof} We provide two different proofs for the lemma.
Since $G$ is not discrete $\lambda_{\g}(\{e\})=0$. By regularity of $\lambda_{\g}$, there is for each $n\in \N$, a neighbourhood of the identity $U_n$ such that $\lambda_{\g}(U_n)\leq 1/n$.  If $U=\bigcap\limits_{n<w}U_n$, it follows that $\lambda_{\g}(U)=0$ and, therefore, $U$ has empty interior.
	
 We can also give  a lower level proof of this Lemma without invoking Haar measure.

 Let   $N$ be a  countably infinite,     relatively compact subset of $G$, such that  $e$ is  a limit point of $N$, i.e., such that  $e\in \overline{N\setminus \{e\}}$. To find  such $N$ it is enough to recall that compact sets are limit point compact, if we start with any countable  relatively compact set $M$ and a limit point $p$ of $M$, $N=p^{-1}\left(M\setminus\{p\}\right)$ will have the identity as a limit point.

 Enumerate  $N\setminus\{e\}=\{x_n\colon n \in \N\}$ and define, for each $n\in \N$, $W_n=G\setminus \{x_1,\ldots x_n\} $. Noting that $W_n$ is a neighbourhood of the identity,  one can pick another symmetric neighbourhood of the identity, $U_n$, such that $U_n^2 \subseteq W_n$. Suppose we can  find $x\in \mathrm{int}\left(\bigcap_{n} U_n\right)  $. Then
\[e\in x^{-1}\mathrm{int}\left(\bigcap_{n} U_n\right)\subseteq \bigcap_n U_n^2\subseteq \bigcap_n W_n. \] But this goes against our choice of $N$, for, in that case,  $e$ would be an interior point of $\bigcap_n W_n=G\setminus\left(N\setminus\{e\}\right)$. \end{proof}

     Statement (ii) of the following theorem was proved in \cite[Theorem 5.2]{ghahlau1}.

\begin{theorem} \label{ArNotAir1} Let $G$ be a non-discrete, locally compact group.
\begin{enumerate}
\item Then the quotient space $S^1A(G)^*/\W(S^1A(G))$ has  $\ell^\infty$ as a quotient.
\item In particular, $S^1A(G)$ is not Arens regular.
\item $S^1A(G)$ is enAr when $G$ is second countable.
\end{enumerate}
\end{theorem}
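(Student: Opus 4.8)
The plan is to produce inside $S^1A(G)$ a \emph{countable} weak TI-net which is simultaneously an $\ell^1(\omega)$-base, and then read off all three statements from Theorem \ref{TI} with $\eta=\omega$. The natural candidate is the net $\varphi_\alpha=\lambda_\g(V_\alpha)^{-1}\mathbbm{1}_{U_\alpha}\ast\mathbbm{1}_{V_\alpha}$ of Lemma \ref{TINET}, but we must extract from it a sequence. First I would fix, using Lemma \ref{inters} together with Lemma \ref{nbhd}, a decreasing sequence $\{U_n\colon n\in\N\}$ of relatively compact symmetric open neighbourhoods of $e$ of uniformly bounded Haar measure, with associated $V_n\subseteq U_n$ as in Lemma \ref{nbhd}, chosen so that two requirements hold at every step: (a) $\overline{U_{n+1}}^{\,2}V_n\subseteq U_n$, which (exactly as in the proof of Lemma \ref{TINET}, with $\beta(n)=n+1$) forces $\varphi_m\varphi_n=\varphi_m$ for all $m>n$ and hence makes $\{\varphi_n\}$ a genuine weak TI-sequence, the other one-sided condition being automatic since the pointwise product is commutative; and (b) $\bigcap_n U_n^2$ has empty interior, which is precisely the output of Lemma \ref{inters}. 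Both families of conditions only constrain $U_{n+1}$ to lie inside finitely many prescribed neighbourhoods, so they can be arranged simultaneously.

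The crux --- and the step I expect to be the main obstacle --- is to show that a subsequence of $\{\varphi_n\}$ is an $\ell^1$-base. Here I would argue as in Theorem \ref{unify}, replacing the ``unit'' obstruction there by an ``empty interior'' obstruction. Since every $\varphi_n$ is supported in the fixed compact set $E=\overline{U_1}^{\,2}$, the whole sequence lies in the subspace of $S^1A(G)$ of functions supported in $E$, which is weakly sequentially complete (for $p=2$ this already follows from the weak sequential completeness of the predual $A(G)=VN(G)_\ast$, and in general from Granirer's result \cite[Lemma 18]{granirer87} on $A_p^E(G)$). I claim $\{\varphi_n\}$ has no weak Cauchy subsequence. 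Indeed, a weak Cauchy subsequence would, by weak sequential completeness, converge weakly to some $\psi$ in the algebra; testing against the operators $\lambda(s)\in VN(G)$, for which $\langle u,\lambda(s)\rangle=u(s)$ and which are bounded on $S^1A(G)$ because $\|\cdot\|_{A(G)}\le\|\cdot\|_S$, shows $\psi(s)=\lim_k\varphi_{n_k}(s)$ pointwise. As $\psi\in A(G)\subseteq C_0(G)$ is continuous and $\psi(e)=\lim_k\varphi_{n_k}(e)=1$, we would have $\psi>\tfrac12$ on some open $W\ni e$; but $\psi(s)>0$ forces $s$ to lie in the support of $\varphi_{n_k}$, hence in $U_{n_k}^2$, for all large $k$, so that $W\subseteq\bigcap_n U_n^2$, contradicting (b). Rosenthal's $\ell^1$-theorem \cite[Theorem 1]{rosenthal} then yields a subsequence $\{\varphi_{n_k}\}$ that is an $\ell^1$-base in $S^1A(G)$.

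This subsequence is still a cofinal subfamily of the original net, hence a weak TI-net of true cardinality $\omega$ which is an $\ell^1(\omega)$-base, so Theorem \ref{TI} applies with $\eta=\omega$ and gives at once a bounded linear surjection $S^1A(G)^\ast/\W(S^1A(G))\to\ell^\infty$, proving (i) and therefore (ii). For (iii), when $G$ is second countable both $L^1(G)$ and $A(G)$ are separable, so $d(S^1A(G))=\omega=\eta$, and Theorem \ref{TI}(iii) then gives that $S^1A(G)$ is enAr. The only delicate points are the simultaneous construction of the first paragraph and, above all, the no-weak-Cauchy-subsequence argument, where the weak sequential completeness of the $E$-supported subspace and the empty-interior property of Lemma \ref{inters} have to be combined precisely.
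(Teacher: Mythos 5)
Your proposal is correct and follows essentially the same route as the paper: the weak TI-sequence of Lemma \ref{TINET} built over the neighbourhoods supplied by Lemmas \ref{inters} and \ref{nbhd}, the no-weak-Cauchy-subsequence argument combining weak sequential completeness with the empty-interior property, Rosenthal's $\ell^1$-theorem, and finally Theorem \ref{TI} with $\eta=\omega$. The only cosmetic difference is that the paper gets the weak limit $\psi$ directly in $A(G)$ from the weak sequential completeness of $A(G)$ itself (the Cauchy condition transfers because $\|\cdot\|_{A(G)}\le\|\cdot\|_S$), so your detour through an $E$-supported weakly sequentially complete subspace, while harmless, is not needed here.
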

\begin{proof}
Let $\{U_n\colon n<w\}$ be one of the collections of relatively compact symmetric neighbourhoods of the identity $e$ in $G$ such that $\bigcap\limits_{n<w}U_n$ has an empty interior that Lemma \ref{inters} provides.

Define then a decreasing sequence of relatively compact neighbourhoods
of the identity $V_n$ such that $V_n^2\subseteq U_n$.
 Apply Lemma \ref{nbhd}
to find a neighbourhood $W_n$ of
the identity with $\overline{W_n}
\subseteq V_n$ and
$\lambda_\g(V_n)\leq 2\lambda_\g(W_n)$. If $\varphi_n=
\frac{1}{\lambda_\g(W_n)}\mathbbm{1}_{V_n}\ast
\mathbbm{1}_{W_n}$, we know from Lemma \ref{TINET}
that
$\{\varphi_n\colon n\in \N\}$ is a weak TI-net in $S^1A(G)$.
Suppose that   $ \{\varphi_{n(k)}\colon k<w\}$ is a
weak $\sigma(S^1A(G),S^1A(G)^\ast)$-Cauchy subsequence
of $\{\varphi_n\colon n<w\}$. This subsequence would also be
$\sigma(A(G),A(G)^\ast)$-Cauchy
and weak sequential completeness of $A(G)$ would imply that
there is $\psi\in A(G)$ with $\psi=\lim_k \varphi_{n(k)}$ in the
 $\sigma(A(G),A(G)^\ast)$-topology.
 Observing that $\varphi_n(x)=0$ if $x\notin V_n^2$
 one easily deduces that
  $\psi(x)=0$, if $x\notin \bigcap\limits_{n<w} V_n^2$, which goes against
  $\psi$ being continuous, $\mathrm{int}\left(\bigcap\limits_{n<w} U_n\right)$ being empty and  $\psi(e)=\lim_k \varphi_{n(k)}(e)=1$.

We deduce that $ \{\varphi_{n}\colon n\in \N\}$ does not admit
any $\sigma(S^1A(G),S^1A(G)^\ast)$-Cauchy subsequence.
 By Rosenthal's $\ell^1$-theorem (see \cite[Theorem 1]{rosenthal}),  $ \{\varphi_{n}\colon n\in \N\}$ must then contain a
 subsequence  which is an
 $\ell^1$-base. With this subsequence being also a weak TI-net, it only remains to apply Theorem \ref{TI} to deduce Statement (i). The second statement is now straightforward.
For  Statement (iii), use the fact that  $d(S^1A(G))=\omega$ when $G$ is second countable.
	\end{proof}

%%%%%%%%%%%%%%%%%%%%%%%%%%%%%%%%%%%%%%%%%%%%%%%%%%%%%%%%%%%%%%555
%%%%%%%%%%%%%%%%%%%%%%%%%%%%%%%%%%%%%%%%%%%%%%%%%%%%%%%%%%%%%%%
%%%%%%%%%%%%%%%%%%%%%%%%%%%%%%%%%%%%%%%%%%%%%%%%%%%%%%%%%%%%%%%

\subsection{The Lebesgue-Fourier algebra $LA(G)$}
 The algebra $LA(G)$ is Arens-regular if and only if $G$ is compact. This was proved in \cite[Theorem 5.1]{ghahlau2}  for unimodular groups. Later on,  $LA(G)$ was proved
to be non-Arens regular whenever $G$ is not unimodular see \cite[Corollary 3.6]{forrsprowood07}.

 The theorem in this section proves that the Lebesgue-Fourier algebra is  not only non-Arens regular,  but extremely so,
for any non-compact locally compact group.

 %For our result, we will need to lift functions from quotient groups $G/H$ to $G$, this will be done through the averaging operator
%\[T_H \colon L^1(G)\to L^1(G/H),\quad T_H(f)(\overset{\cdot}{x})=\int_Hf(xh)\,dh.\] This operator  is a contractive Banach algebra homomorphism that is essential for Weil's integration formula, see Section 3.4 of \cite{reiter}.

 As usual, $\kappa(G)$ will be the minimal number of compact sets required to cover $G$ and $\chi(G)$  the minimal cardinality of an open base at the identity $e$ of $G$.

\begin{theorem} \label{ArNotAir2} Let $G$ be a non-compact, locally compact group and assume that $LA(G)$ is equipped with convolution product.
\begin{enumerate}
\item Then the quotient space $LA(G)^*/\W(LA(G))$ has  $\ell^\infty(\kappa(G))$ as a quotient.
\item In particular, $LA(G)$ is not Arens regular.
\item If $\kappa(G)\ge\chi(G),$ then $LA(G)$ is enAr.
\end{enumerate}
\end{theorem}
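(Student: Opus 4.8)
The plan is to verify the three hypotheses of Theorem~\ref{ttr} with $\eta=\kappa(G)$, building the two triangles directly from translates of a single bump pushed out to infinity and letting the approximating sets $X_1,X_2$ \emph{be} the triangle products themselves, so that the limits in Definition~\ref{def:last} vanish identically. Starting from a compact covering $\{K_\alpha:\alpha<\kappa(G)\}$ of minimal cardinality, directed by inclusion, I obtain a directed set $\Lambda$ of true cardinality $\kappa(G)$, which Lemma~\ref{VD} splits into two cofinal halves $\Lambda_1,\Lambda_2$ of true cardinality $\kappa(G)$. Fixing a continuous compactly supported bump $\psi\in L^1(G)\cap A(G)$ (for instance the normalised $\psi=\frac{1}{\lambda_{\g}(V)}\mathbbm{1}_U\ast\mathbbm{1}_V$ of Lemma~\ref{TINET}), I set $a_\alpha=\tau_{g_\alpha}\psi$ for $\alpha\in\Lambda_1$ and $b_\beta=\tau_{h_\beta}\psi$ for $\beta\in\Lambda_2$, where the points $g_\alpha,h_\beta$ will be chosen to escape to infinity. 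The upper and lower triangles then consist of the bumps $a_\alpha\ast b_\beta=\tau_{g_\alpha}(\psi\ast\tau_{h_\beta}\psi)$, each essentially supported near $g_\alpha h_\beta$.

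The combinatorial core is to pick the points $g_\alpha,h_\beta$, by transfinite recursion, so that the products $g_\alpha h_\beta$ arising from comparable index pairs are pairwise distinct and so widely separated that the corresponding bumps have pairwise disjoint supports. This is possible precisely because $\kappa(G)$ is minimal: fewer than $\kappa(G)$ compact sets can never cover $G$, so at each stage of the recursion (where only $<\kappa(G)$ products have been committed) there is always room to place the next translate outside the region already used. Disjointness of supports makes the $L^1$-norm additive, whence $\|\sum_n z_n x_{\alpha_n\beta_n}\|_S\ge\|\sum_n z_n x_{\alpha_n\beta_n}\|_1=\sum_n|z_n|\,\|x_{\alpha_n\beta_n}\|_1$, so $X=X_1\cup X_2$ is an $\ell^1(\kappa(G))$-base; distinctness of the products makes $X_1\cap X_2=\emptyset$ and renders $X_1,X_2$ both vertically and horizontally injective, so Condition~(iii) holds. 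Theorem~\ref{ttr} and Corollary~\ref{corollary} then deliver the bounded surjection $LA(G)^\ast/\W(LA(G))\to\ell^\infty(\kappa(G))$ of~(i), and~(ii) is immediate.

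The hard part — and exactly the step that removes the unimodularity hypothesis of \cite{ghahlau2} — is to keep the two families bounded in the Segal norm $\|\cdot\|_S$ while maintaining the $\ell^1$-base. Left invariance of $\|\cdot\|_1$ and of $\|\cdot\|_{A(G)}$ bounds $\|a_\alpha\|_S$ and $\|b_\beta\|_S$, and $\|a_\alpha\ast b_\beta\|_1\le\|\psi\|_1^2$ is automatic; but the Fourier part $\|a_\alpha\ast b_\beta\|_{A(G)}=\|\psi\ast\tau_{h_\beta}\psi\|_{A(G)}$, by the $L^2$-factorisation of $A(G)$, is of the order of $\Delta(h_\beta)^{1/2}$ and can blow up as $h_\beta\to\infty$ when $G$ is not unimodular. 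Here lies a genuine tension: rescaling $b_\beta$ by $\Delta(h_\beta)^{-1/2}$ restores the bound on the Fourier norm but simultaneously deflates $\|a_\alpha\ast b_\beta\|_1$ towards $0$, jeopardising the uniform lower bound that an $\ell^1$-base demands. Reconciling the two — by a careful, likely $\Delta$-adapted, choice of the escaping points and of the normalising constants (or, when the unimodular kernel $\ker\Delta$ is non-compact, by confining the points there, the complementary case $G/\ker\Delta$ non-compact forcing $\kappa(G)=\omega$ and being handled along a sequence) — is the main obstacle of the proof.

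Finally, for~(iii) I would bound the density character: a $\|\cdot\|_S$-dense set of nice functions on $G$ has cardinality at most the weight $w(G)=\max(\kappa(G),\chi(G))$, so $d(LA(G))\le\max(\kappa(G),\chi(G))$, which equals $\kappa(G)$ under the hypothesis $\kappa(G)\ge\chi(G)$. Then $d(LA(G))\le\eta=\kappa(G)$, and the last clause of Corollary~\ref{corollary} yields that $LA(G)$ is enAr.
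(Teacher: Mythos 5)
Your overall strategy is the paper's: build the two triangles from translates of a fixed bump with pairwise disjoint supports, get the $\ell^1(\kappa(G))$-base from the lower $L^1$-bound, take $X_1,X_2$ to be the triangle products themselves, and finish with Theorem~\ref{ttr} and Corollary~\ref{corollary}; your treatment of (iii) via $d(LA(G))\le\max(\kappa(G),\chi(G))$ is also fine. You have also correctly located the crux: when $G$ is not unimodular the $A(G)$-norm of $a_\alpha\ast b_\beta$ picks up a factor $\Delta(h_\beta)^{\pm 1/2}$, and any renormalisation that tames it destroys the uniform lower $L^1$-bound. But you leave exactly this point unresolved, and the repair you sketch does not work. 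Confining the points to $\ker\Delta$ is only adequate when $\kappa(\ker\Delta)=\kappa(G)$; if $\ker\Delta$ is non-compact but can be covered by fewer than $\kappa(G)$ compact sets, your construction produces only $\ell^\infty(\kappa(\ker\Delta))$, which is too small for (i) and for the enAr conclusion. And the claim that the complementary case forces $\kappa(G)=\omega$ is false: for an uncountable subgroup $S$ of $\R^+$ given the discrete topology, $G=\R\rtimes S$ has $\ker\Delta=\R$, so $\kappa(\ker\Delta)=\omega$ while $\kappa(G)=\kappa(G/\ker\Delta)=|S|>\omega$. This is precisely the case where the whole difficulty lives.

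The paper's missing idea is a lifting argument rather than a direct construction in $G$. One first reduces, by the trichotomy on $\ker\Delta$ (using that $\ker\Delta$ compact forces $G$ to be SIN, hence unimodular), to a pair of normal subgroups $H\le N$ with $N/H$ unimodular and $\kappa(N/H)=\kappa(G)$; the points $x_\alpha$ are then chosen in $N/H$, where $\Delta_{G/H}(x_\alpha)=1$, and the two-sided translates $\varphi_\alpha=\mathbbm{1}_{x_\alpha K}\ast\mathbbm{1}_U$, $\psi_\beta=\mathbbm{1}_U\ast\mathbbm{1}_{Kx_\beta}$ keep $\supp(\varphi_\alpha\ast\psi_\beta)\subseteq x_\alpha KU^2Kx_\beta$ under control. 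The resulting $\ell^1(\kappa(G))$-base lives in $L^1(G/H)$, and it is pulled back to $LA(G)$ through the averaging operator $T_H$, using the non-trivial fact from \cite{forrsprowood07} that $T_H\colon LA(G)\to L^1(G/H)$ is a quotient operator and an algebra homomorphism when $H$ is non-compact; the $\ell^1$-estimate survives because $\|T_H(\sum z_n\,\cdot\,)\|_{L^1(G/H)}$ bounds $\sum|z_n|$ from below while $T_H$ is bounded above. Without this (or some equally effective substitute for handling the case $\kappa(G/\ker\Delta)=\kappa(G)>\kappa(\ker\Delta)$), your argument only proves the theorem for unimodular $G$ and for groups with $\kappa(\ker\Delta)=\kappa(G)$, which is essentially the situation already covered by \cite{ghahlau2}.
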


\begin{proof}
Without loss of generality we can assume  that $G$ contains two normal subgroups $H\leq N$ such that  $N/H$ is unimodular and $\kappa(N/H)= \kappa(G)$. The choices of $N$ and $H$ depend on the size of  $\ker \Delta_{\g}$, the kernel of the modular function. If  $\kappa(\ker \Delta_{\g})=\kappa(G)$,  one takes $N=\ker \Delta_{\g}$ and $H=\{e\}$; if   $\kappa(G/\ker \Delta_{\g})=\kappa(G)$ and $\ker \Delta_{\g}$ is not compact, one takes $N=G$ and $H=\ker \Delta_{\g}$ and if   $\kappa(G/\ker \Delta_{\g})=\kappa(G)$  and $\ker \Delta_{\g}$ is  compact,  one takes $N=G$ and $H=\{e\}.$ The latter choice is possible because, when $\ker \Delta_{\g}$ is compact,  $G$ is an SIN group (as the extension of  a compact group by an Abelian one) and hence unimodular.

Let $\kappa$ be the initial ordinal associated to the cardinal $\kappa(G)=  \kappa(G/H)=\kappa(N/H)$ with its natural order.

 Fix   an open relatively compact      symmetric neighbourhood  $U$ of the identity in $G/H$  such that $\lambda_{\gh}(U)\le 1$. Since the unimodular function $\Delta_{\gh} $ is a continuous homomorphism into the multiplicative group $\R^+$ and $\overline{U}$ is compact, there
is a  real number $M\geq 1$ such that
\begin{equation}
  \label{bound}
  \frac{1}{M}\leq \Delta_{\gh}\left(t\right)\leq M, \quad \mbox{ for all $t \in U$}.
\end{equation}
Choose next a compact symmetric neighbourhood  $K$ of the identity of $G/H$ such that
\begin{equation}
  \label{boundd}
\lambda_{\gh}(K)\lambda_{\gh}(U)\ge  M.
\end{equation}

Take now any compact covering of $G/H$ of cardinality $\kappa.$ Since every member of this covering is covered by a finite union of open sets,
we may extract a covering $(K_\alpha)_{\alpha\prec\kappa}$ of $G/H$ made of relatively compact symmetric open subsets.
Form then an increasing covering of $G/H$ made of symmetric open sets  by writing inductively
\[V_0 = KU^2K\quad\text{and}\quad V_\alpha =  \bigcup\limits_{\gamma\prec\alpha} K_{\gamma}.\]

%Since $\kappa=\kappa(G)$ we can find an  increasing cover $(V_\alpha)_{\alpha\prec \kappa}$ of $G$ made of open relatively compact %symmetric sets. We can assume that $KU^2K\subset V_0$.

%Based on them,  form inductively an increasing family of open sets  by writing
%\[V_0 = KU^2K W_0 \quad\text{ and }\quad (V_\alpha = KU^2K\left(\bigcup_{\gamma\prec\alpha} W_{\gamma}\right).\]
%Suppose first that $\kappa(H)=\kappa(G).$

Since $N/H$ cannot be covered by fewer than $\kappa$ compact subsets, it is possible to form inductively a set $\{x_\alpha:\alpha\prec\kappa\}$
in $N/H$ such that  \[\left(V_\alpha x_\alpha V_\alpha\right)\bigcap( V_\beta x_\beta V_\beta)=\emptyset\;\mbox{ for every} \; \alpha\prec\beta\prec\kappa.\]

Consider, for each $\alpha\prec\kappa$, the following functions $\varphi_\alpha,\; \psi_\alpha\in LA(G/H)$
 \[\varphi_\alpha=\mathbbm{1}_{x_\alpha K}\ast \mathbbm{1}_U
\quad\mbox{ and  }\quad\psi_\alpha=\mathbbm{1}_U\ast\mathbbm{1}_{Kx_\alpha}.\]

\textbf{Claim 1:}  \emph{For $\alpha, \,\beta \prec \kappa$,  the $LA(G)$-norm of the functions $\varphi_\alpha\ast\psi_\beta$ satisfies the inequalities:}
\[1\leq \norm{\varphi_\alpha\ast\psi_\beta}_{_{LA(G)}}\leq (1+M)\norm{\varphi_\alpha\ast \psi_\beta}_1=(1+M)\lambda_{\gh}(K)\lambda_{\gh}(U).\]
\\
We first note that
\begin{align}
  \notag \norm{\varphi_\alpha}_1
&=\int\limits_{\gh}\int\limits_{\gh}\mathbbm{1}_{x_\alpha K}(t)\mathbbm{1}_U(t^{-1}s)\; d\lambda_{\gh}(t)\;d\lambda_{\gh}(s)\\&=\lambda_{\gh}(K)\lambda_{\gh}(U)\geq M\geq 1.
\label{lowerbound1}
%=\&\ge \lambda_{\gh}(V_0)\lambda_{\gh}(U)\ge 1.
\end{align}
Taking into account that $N/H$ is a unimodular, normal  subgroup of $G/H$, and so that $\Delta_{\gh}(x_\alpha)=1$ for every $\alpha\prec \kappa$,
\begin{align}\notag \|\psi_\alpha\|_1&=\int\limits_{\gh}\int\limits_{\gh}\mathbbm{1}_{U}(t)\mathbbm{1}_{Kx_\alpha}(t^{-1}s)\; d\lambda_{\gh}(t)\;d\lambda_{\gh}(s)
=\lambda_{\gh}(U)\lambda_{\gh}(Kx_\alpha)\\&=\lambda_{\gh}(U)\lambda_{\gh}(K)\ge M \ge 1 .\label{lowerbound2}\end{align}

To obtain a similar estimate for  the functions $\check\psi_\alpha$, we note that, for any $\alpha \in \kappa$,
\begin{align*}
  \norm{\check{\psi}_\alpha}_1&=\int\limits_{\gh}
  \int\limits_{\gh} \mathbbm{1}_{U}(t) \mathbbm{1}_{Kx_\alpha}(t^{-1}s^{-1})\,d\lambda_{\gh}(s)\, d\lambda_{\gh}(t)\\&=
  \int\limits_{\gh}\mathbbm{1}_{U}(t) \lambda_{\gh}\left(x_\alpha^{-1}Kt^{-1}\right)\, d\lambda_{\gh}(t)\\
  &=\int\limits_{\gh}\mathbbm{1}_{U}(t) \Delta_{\gh}\left(t^{-1}\right)\lambda_{\gh}\left(K\right)\, d\lambda_{\gh}(t).
\end{align*}
Therefore, using \eqref{bound} and \eqref{boundd},
\begin{equation*}
%\label{M}
  1\leq \frac{1}{M}\,\lambda_{\gh}(K)\lambda_{\gh}(U)\leq  \norm{\check{\psi}_\alpha}_1
  \leq M\lambda_{\gh}(K)\lambda_{\gh}(U)=M\norm{\psi_\alpha}_1  .
  \end{equation*}
So, \begin{equation}\label{MM}1\le \|\check\psi_\alpha\|_1\le M\|\psi_\alpha\|_1\quad\text{for every } \alpha\prec\kappa.\end{equation}

Since,  as easily checked, $0\le\varphi_\alpha,\psi_\alpha \le\lambda_{\gh}(U)\leq 1$ , we can deduce from \eqref{lowerbound1} and \eqref{MM} that
\begin{equation}\label{1-2}\norm{\varphi_\alpha}_2 \leq \norm{\varphi_\alpha}_1,\quad \norm{\check\psi_\alpha}_2\leq \norm{\check\psi_\alpha}_1.\end{equation}

Relations \eqref{MM}--\eqref{1-2} and the following equality
 (due again to a simple  application of Fubini's theorem),
  \begin{equation}\label{conv}
\norm{\varphi_\alpha \ast \psi_\beta}_{1}=\norm{\varphi_\alpha}_1\norm{\psi_\beta}_1
\mbox{  for every $\alpha,\beta \prec\kappa$},
\end{equation}
yield the non-obvious parts of the following  inequalities
\begin{align*}
%\norm{\varphi_\alpha\ast\psi_\beta}_{1}&\leq
1\leq \norm{\varphi_\alpha\ast\psi_\beta}_{_{LA(G/H)}}
&=\norm{\varphi_\alpha\ast\psi_\beta}_{1}+\norm{\varphi_\alpha\ast\psi_\beta}_{A(G/H)}
\\
&\leq \norm{\varphi_\alpha\ast\psi_\beta}_{1}+
\norm{\varphi_\alpha}_2\norm{\check\psi_\beta}_{2}
\\
& \leq \norm{\varphi_\alpha\ast\psi_\beta}_{1}+
\norm{\varphi_\alpha}_1\norm{\check\psi_\beta}_{1}
\\&\le\norm{\varphi_\alpha\ast\psi_\beta}_{1}+
M\norm{\varphi_\alpha}_1\norm{\psi_\beta}_{1}
%\\&=(1+M)\norm{\varphi_\alpha}_1\norm{\psi_\beta}_{1}
\\&=(1+M)\norm{\varphi_\alpha\ast\psi_\beta}_{1}.
\end{align*}
Claim 1 is proved.
\smallskip

\textbf{Claim 2:} \emph{For each $\alpha,\beta\in \kappa$ there exist $ \beta_\alpha\in \kappa$ and $\alpha_\beta \in \kappa$ such  that
\begin{align*}
\supp\left(\varphi_\alpha \ast \psi_\beta\right)& \subseteq x_\alpha KU^2Kx_\beta\subseteq V_\beta x_\beta \mbox{ for every $\beta\succ \beta_\alpha$, and }\\
\supp\left(\varphi_\alpha \ast \psi_\beta\right)& \subseteq x_\alpha KU^2Kx_\beta\subseteq x_\alpha V_\alpha \mbox{ for every $\alpha \succ \alpha_\beta$}.
\end{align*}}
It is obvious that $\supp\left(\varphi_\alpha \ast \psi_\beta\right)\subseteq
x_\alpha KU^2Kx_\beta$.
To prove the other inclusions in the  claim one just has to notice that
 $(V_\gamma)_{\gamma\prec\kappa}$ is an increasing open cover of $G$ and that $x_\alpha KU^2K$ and $KU^2Kx_\beta$ are compact subsets of $G$.

\smallskip

Now that Claim 2 is proved, we  partition  $\kappa$ into two disjoint cofinal copies  $\kappa_1$ and $\kappa_2$, each of true  cardinality $\kappa(G)$, (for this we use Lemma \ref{VD} and Remark \ref{VDrem}). Note also that in this case the cardinality of $\kappa_1$ and
$\kappa_2$ is    $\kappa$.
Define then \begin{align*}X_1&=\{\varphi_{\alpha}\ast \psi_\beta: \beta\in \kappa_2, \beta\succeq\beta_\alpha\}\quad
 \mbox{ and } \\
X_2&=\{\varphi_{\alpha}\ast\psi_\beta: \alpha\in \kappa_1, \alpha\succeq\alpha_\beta\}.\end{align*}

\textbf{Claim 3:} \emph{ $X_1\cup X_2$ is an $\ell^1(\kappa)$-base in  both $L^1(G/H)$ and $LA(G/H)$.}\\
 Since by Claim 1,  $X_1\cup X_2$ is bounded in $LA(G/H)$, and $\norm{\cdot}_{LA(G/H)}\geq \norm{\cdot}_1$, the set $X_1\cup X_2$ will be  an $\ell^1(\kappa)$-base in the  algebra $LA(G/H)$ as soon as it is an $\ell^1(\kappa)$-base in $L^1(G/H)$.

Now, as seen in \eqref{lowerbound1} and \eqref{lowerbound2}, the functions $\varphi_\alpha$ and $\psi_\alpha$ are bounded away from 0. The identity \eqref{conv}
  shows that the same is true for the elements of $X_1\cup X_2$. So, for the functions in $X_1\cup X_2$ to form an $\ell^1(\kappa)$-base in $L^1(G/H)$,  it will be  enough that their supports  are pairwise disjoint.
For two different pairs $(\alpha,\beta)$ and $(\alpha^
\prime,\beta^\prime)$, the following
possibilities arise:

\begin{itemize}
 \item[]\textbf{Case 1:} \emph{$\varphi_\alpha \ast \psi_\beta\in X_1$ and $\varphi_{\alpha^\prime} \ast \psi_{\beta^\prime}\in X_2$.}
     In this case, $\beta \in \kappa_2$  and
   $\beta \succeq \beta_\alpha$, while $\alpha^\prime \in \kappa_1$  and $\alpha^\prime\succeq \alpha_\beta^\prime$.
      By Claim 2, the supports of   $\varphi_{\alpha^\prime}\ast\psi_{\beta^\prime}$ and $\varphi_{\alpha}\ast\psi_{\beta}$, are contained  (respectively) in $x_{\alpha^\prime }V_{\alpha^\prime}$ and $V_{\beta} x_\beta$. Since, by construction, these sets are disjoint, so will be  the supports of $\varphi_{\alpha^\prime}\ast\psi_{\beta^\prime}$ and  $\varphi_{\alpha}\ast\psi_{\beta}$.
\item[]\textbf{Case 2:} \emph{Both   $\varphi_\alpha \ast \psi_\beta$ and $\varphi_{\alpha^\prime} \ast \psi_{\beta^\prime}$ are in $X_1$ and $\beta \neq \beta^\prime$.} In this case we have by Claim 2  the supports of $\varphi_\alpha\ast \psi_\beta$ and $\varphi_{\alpha^\prime}\ast \psi_{\beta^\prime}$ are contained, respectively, in  $V_\beta x_\beta$  and $V_{\beta^\prime}x_{\beta^\prime}$, and these sets are disjoint by construction.
\item[]\textbf{Case 3:} \emph{Both   $\varphi_\alpha \ast \psi_\beta$ and $\varphi_{\alpha^\prime} \ast \psi_{\beta^\prime}$ are in $X_1$ and $\beta = \beta^\prime$.} Now, the supports of $\varphi_\alpha\ast \psi_\beta$ and $\varphi_{\alpha^\prime}\ast \psi_{\beta^\prime}$ are contained, respectively, in $x_{\alpha}KU^2Kx_{\beta}$ and  $x_{\alpha^\prime}KU^2Kx_{\beta}$. These sets are disjoint because  $x_{\alpha}KU^2K\subseteq x_\alpha V_\alpha$ and
 $x_{\alpha^\prime}KU^2K\subseteq x_{\alpha^\prime} V_{\alpha^\prime}$ and     $x_\alpha V_\alpha$ and $x_{\alpha^\prime} V_{\alpha^\prime}$ are disjoint, by construction.
\item[]\textbf{Cases 4 and 5:} \emph{Both   $\varphi_\alpha \ast \psi_\beta$ and $\varphi_{\alpha^\prime} \ast \psi_{\beta^\prime}$ are in $X_2$, with either $\alpha\neq \alpha^\prime$ or  $\alpha= \alpha^\prime$}. Repeating the arguments of Cases 2 and 3 one readily sees that the supports of  $\varphi_\alpha\ast \psi_\beta$ and $\varphi_{\alpha^\prime}\ast \psi_{\beta^\prime}$ are also disjoint.
\end{itemize}

We have therefore checked that $X_1\cup X_2$ is an $\ell^1(\kappa)$-base in $LA(G/H)$. Claim 3 is proved.
\smallskip

We proceed  now to lift this  $\ell^1(\kappa)$-base  to $LA(G)$. Recall from the beginning of the proof that $H$ is either $\{e\}$ or  non-compact. Since in case $H=\{e\}$,   our $\ell^1(\kappa)$-base is already in $LA(G)$,  we may assume  that $H$ is not compact.

To lift the functions in $X_1\cup X_2$  to $LA(G)$, we consider the averaging operator
\[T_H \colon L^1(G)\to L^1(G/H),\quad T_H f(\overset{\cdot}{x})=\int\limits_{\h} f(xh) \, d\lambda_{\h}(h).\]
The averaging operator is a quotient operator and a   Banach algebra homomorphism, see Section 3.4 of \cite{reiter}.
By   \cite[Theorem 3.4]{forrsprowood07}, when $H$ is not compact the   restriction of  $T_H$  to $LA(G)$ is still a quotient operator $T_H\colon LA(G)\to L^1(G/H)$.
So,  for any $\phi\in L^1(G/H) $ there are   $k>0$ and $\lH{\phi}\in LA(G)$ with $T_H\left(\lH{\phi}\right)=\phi$ such that
\begin{align}
&\frac1k\norm{\phi}_{_{L^1(G/H)}}\leq \norm{\lH{\phi}}_{_{LA(G)}}\leq k \norm{\phi}_{_{L^1(G/H)}}, \label{liftn}
\end{align}
where we have used the notation $\norm{\cdot}_{_{LA(G)}}$ instead of $\norm{\cdot}_{_{S}}$ to be able to stress the difference between $G$ and $G/H$.
With the help of this lifting property, we define the sets
 \begin{align*}\lH{X_1}&=
 \left\{\lH{\varphi_\alpha }\ast\lH{\psi_\beta} \colon  \alpha\in\kappa_1,\,\beta\in \kappa_2, \,\beta\succeq\beta_\alpha\right\}\quad
 \mbox{ and } \\
\lH{X_2}&=
 \left\{\lH{\varphi_\alpha }\ast\lH{\psi_\beta}
\colon \alpha\in \kappa_1, \,\beta\in \kappa_2,\, \alpha\succeq\alpha_\beta\right\}.  \end{align*}

 %Since $T_H$ is   an algebra homomorphism, we see that $T_H\left(\lH{X_j}\right)=X_j$ for $j=1,2$.
 We know from Claim 3  that $X_1\cup X_2$ is an $\ell^1$-base in both $L^1(G/H)$ and  $LA(G/H)$. Using that  $T_H$ is a bounded homomorphism  and the inequalities in  \eqref{liftn}, the $\ell^1$-property of $X_1\cup X_2$ is  acquired by $\lH{X_1}\cup \lH{X_2}$.
More precisely, for $z_1,...,z_p\in \C,$  we have that,  for some $M>0$, \begin{align*} \frac{1}{M}\sum_{n=1}^p |z_n|& \leq  \left\|\sum_{n=1}^p z_n (\varphi_n\ast\psi_n)\right\|_{L^1(G/H)}\\&=
\left\| \sum_{n=1}^pz_n T_H\left((\varphi_n)_H\right)\ast T_H\left((\psi_n)_H\right)\right\|_{L^1(G/H)}
\\&=\left\|T_H\left(\sum_{n=1}^p z_n (\varphi_n)_H\ast (\psi_n)_H\right)\right\|_{L^1(G/H)}
\\&\le k \left\|\sum_{n=1}^p z_n (\varphi_n)_H\ast(\psi_n)_H\right\|_{A(G)},\end{align*}
where the first equality follows from the fact that the summand functions are part of an $\ell^1(\kappa)$-base (Claim 3),
the third equality from the fact that $T_H$ is an algebra homomorphism and the last inequality from the fact that $T_H$ is bounded by $k$
as seen in \eqref{liftn}.
The second inequality in  \eqref{liftn} proves that $\lH{X_1}\cup \lH{X_2}$ is bounded in $LA(G).$

Therefore, the set
 $\lH{X_1}\cup \lH{X_2}$ is also  an $\ell^1(\kappa)$-base in $LA(G)$ when $H$ is not compact.

We finally define the sets
\[
A=\left\{\lH{\varphi_\alpha  }\colon \alpha\in\kappa_1\right\}\quad \mbox{ and } \quad B=\left\{\lH{\psi_\alpha}\colon \alpha\in\kappa_2\right\},
\]
where the functions $\varphi_\alpha$ and $\psi_\alpha$ are the ones we got previously in $LA(G/H).$

Property \eqref{liftn} and Claim 1 imply that  $A$ and $B$ are bounded sets in $LA(G)$, both when $H$ is not compact and when $H=\{e\}$.
They  are indexed by subsets of $\kappa $ of true  cardinality (and so of cardinality)
$\kappa(G)$ and the sets $\lH{X_1}$ and $\lH{X_2}$  approximate segments in, respectively, $T_{AB}^u$ and $T_{AB}^l$. Horizontal and vertical injectivity are guaranteed since we have seen that even  the supports of  $\varphi_\alpha\ast \psi_\beta$ and $\varphi_{\alpha^\prime}\ast \psi_{\beta^\prime}$ are disjoint when $(\alpha,\beta)\neq (\alpha^\prime,\beta^\prime)$.

We have therefore met all the conditions required to apply Corollary
\ref{corollary}. The proof is then  complete.
\end{proof}

\subsection{The Fig\`a-Talamanca Herz algebras $A_p(G)$}

Satement (ii) of the following theorem was proved in \cite[Theorem 3.2]{forrest91}.

\begin{theorem} \label{ArNotAir30} Let $G$ be an infinite, non-discrete, locally compact group. Then,
\begin{enumerate}
\item the quotient space $PM_p(G)/\W(A_p(G))$ has $\ell^\infty$ as quotient.
\item in particular, $A_p(G)$ is not Arens regular.
\item $A_p(G)$ is enAr when $G$ is in addition second countable.
\end{enumerate}
\end{theorem}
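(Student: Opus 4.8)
The plan is to transcribe the argument of Theorem \ref{ArNotAir1} almost verbatim, replacing the weak sequential completeness of $A(G)$ (which for $p\neq 2$ is unavailable for all of $A_p(G)$) by that of the subalgebra $A_p^E(G)$ of functions in $A_p(G)$ supported in a fixed compact set $E$, a fact due to Granirer \cite[Lemma 18]{granirer87}. First I would invoke Lemma \ref{inters} to obtain a sequence $\{U_n\colon n<\omega\}$ of relatively compact symmetric neighbourhoods of the identity $e$ whose intersection $\bigcap_{n}U_n$ has empty interior; shrinking if necessary I arrange $U_{n+1}\subseteq U_n$, so that every $U_n$ lies inside the fixed compact set $E:=\overline{U_0}$ and $\lambda_{\g}(U_n)\leq \lambda_{\g}(E)=:M$. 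Following the recipe of Lemma \ref{TINET}, with decreasing symmetric neighbourhoods $V_n$ satisfying $V_n^2\subseteq U_n$ and sets $W_n$ with $\overline{W_n}\subseteq V_n$ and $\lambda_{\g}(V_n)\leq 2\lambda_{\g}(W_n)$, I set $\varphi_n=\frac{1}{\lambda_{\g}(W_n)}\,\mathbbm{1}_{V_n}\ast\mathbbm{1}_{W_n}$, obtaining a weak TI-net $\{\varphi_n\colon n<\omega\}$ in $A_p(G)$, bounded in the $A_p$-norm, with $\supp\varphi_n\subseteq V_n^2\subseteq U_n\subseteq E$ and $\varphi_n(e)=1$.

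The heart of the argument is to show that $\{\varphi_n\}$ admits no $\sigma(A_p(G),PM_p(G))$-Cauchy subsequence. Suppose $\{\varphi_{n(k)}\colon k<\omega\}$ were one. Since all its terms lie in the closed subspace $A_p^E(G)$, and a weakly Cauchy sequence inside a closed subspace is weakly Cauchy there, the weak sequential completeness of $A_p^E(G)$ furnishes $\psi\in A_p^E(G)\subseteq A_p(G)$ with $\psi=\lim_k\varphi_{n(k)}$ in the $\sigma(A_p(G),PM_p(G))$-topology. Because $A_p(G)\subseteq C_0(G)$, point evaluations are continuous functionals, so $\psi(x)=\lim_k\varphi_{n(k)}(x)$ for every $x$. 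For $x\notin\bigcap_n V_n^2$ there is some $n$ with $x\notin V_n^2$, whence $\varphi_m(x)=0$ for all $m\geq n$ and thus $\psi(x)=0$; on the other hand $\psi(e)=\lim_k\varphi_{n(k)}(e)=1$. As $\bigcap_n V_n^2\subseteq\bigcap_n U_n$ has empty interior, its complement is dense, so continuity of $\psi$ forces $\psi\equiv 0$, contradicting $\psi(e)=1$.

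Having ruled out weak-Cauchy subsequences, Rosenthal's $\ell^1$-theorem \cite[Theorem 1]{rosenthal} yields a subsequence of $\{\varphi_n\}$ that is an $\ell^1$-base in $A_p(G)$, and this subsequence is still a weak TI-net (by the exact relation $\varphi_\gamma\varphi_\alpha=\varphi_\gamma$ of Lemma \ref{TINET}). Statement (i) is then immediate from Theorem \ref{TI}(i) with $\eta=\omega$ (a countable index set needs only the elementary partition noted in Remark \ref{VDrem}), since $A_p(G)^\ast=PM_p(G)$ and $\ell^\infty(\omega)=\ell^\infty$; statement (ii) follows at once. For (iii), second countability gives $d(A_p(G))=\omega=\eta$, so Theorem \ref{TI}(iii) (equivalently Corollary \ref{corollary} combined with Corollary \ref{lemma}) shows that $A_p(G)$ is enAr.

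The main obstacle I anticipate is the bookkeeping of the first and second paragraphs: verifying that the net produced by Lemma \ref{TINET} really lands in $A_p(G)$ with a single compact $E$ containing all supports and with the normalization $\varphi_n(e)=1$, and then confirming that it is the weak sequential completeness of $A_p^E(G)$, rather than of the whole algebra, that drives the no-weak-Cauchy-subsequence step (the only place where the discreteness hypothesis and the restriction to a compact support set are genuinely used). Once these two points are secured, the remainder is a direct application of Theorem \ref{TI}.
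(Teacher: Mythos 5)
Your proposal is correct and follows essentially the same route as the paper: the paper's proof is precisely the argument of Theorem \ref{ArNotAir1} with the weak sequential completeness of $A(G)$ replaced by that of Granirer's subalgebra $A_p^{U_1}(G)$ (your $A_p^E(G)$), followed by Rosenthal's $\ell^1$-theorem and Theorem \ref{TI}. Your write-up merely makes explicit the details that the paper delegates to the earlier proof.
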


\begin{proof}
 Let  $\{\varphi_n\colon n< w\}$ be   a weak TI-sequence constructed as in Theorem \ref{ArNotAir1} with each $\varphi_n$  supported in  a neighbourhood $U_n$ of the identity in such a  way that the sequence $\{U_n\colon n< w\}$ is decreasing and
 $\bigcap\limits_{n< w} U_n$ has empty interior.
By  \cite[Lemma 18]{granirer87},
 \[A_p^{U_1}(G)=\{ \varphi \in A_p(G): \supp  \varphi \subseteq U_1\}\] is weakly sequentially complete. Replacing weak sequential completeness of $A(G)$ by weak sequential completeness of $A_p^{U_1}(G)$,
 the argument of Theorem \ref{ArNotAir1} now
 proves that a subsequence  of $\{\varphi_n\colon n< w\}$  is both an
 $\ell^1$-base,  and a weak TI-net. All the statements of the present theorem then follow from Theorem \ref{TI} as in Theorem \ref{ArNotAir1}.
 \end{proof}

The analog of Statement (ii) of the following theorem was proved in \cite[Proposition 3.5]{forrest91} for $H=G$ assuming that $G$ is amenable and $A_p(G)$ is weakly sequentially complete, and
in  \cite[Theorem 2]{forrest93} only assuming  that  $H$ is Abelian. Our method works with any amenable subgroup $H$ with $A_p(H)$ weakly sequentially complete.

\begin{theorem} \label{ArNotAir3} Let $G$ be a locally compact group with an infinite amenable  open subgroup $H$ such that
 that $A_p(H)$ is weakly sequentially complete .
\begin{enumerate}
\item Then  each of  the quotient spaces $PM_p(H)/\W(A_p(H))$ \\ and
$PM_p(G)/\W(A_p(G))$ has  $\ell^\infty$  as a quotient.
\item In particular, $A_p(H)$ and $A_p(G)$ are non-Arens regular.
\item  If    $G$ is, in addition, second countable, then $A_p(H)$ and $A_p(G)$ are enAr.
\end{enumerate}
\end{theorem}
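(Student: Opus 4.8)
The plan is to derive everything from Theorem \ref{unify}, whose hypotheses require only a non-unital, weakly sequentially complete subalgebra carrying a bounded approximate identity. The two external inputs I need are: (a) that amenability of $H$ forces $A_p(H)$ to possess a bai, which is Leptin's theorem \cite{herz}; and (b) that for an \emph{open} subgroup $H$ the extension-by-zero map identifies $A_p(H)$ with a closed subalgebra of $A_p(G)$, isometrically and multiplicatively (Herz). Before starting I would dispose of the compact case: if $H$ is compact then, being infinite, it is non-discrete, whence $G$ is non-discrete as well, and Theorem \ref{ArNotAir30} already yields both statements for $A_p(H)$ and $A_p(G)$. I may therefore assume $H$ non-compact, which is exactly what guarantees that $A_p(H)$ is non-unital (its only candidate for a unit is the constant function $1$, which lies in $A_p(H)$ precisely when $H$ is compact).

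First I would settle the statements for $H$. By (a) the algebra $A_p(H)$ has a bai; by hypothesis it is weakly sequentially complete; and by the previous paragraph it is non-unital. Applying Theorem \ref{unify} with $\mA=\mB=A_p(H)$ produces a bounded linear surjection of $A_p(H)^*/\W(A_p(H))=PM_p(H)/\W(A_p(H))$ onto $\ell^\infty$, which is statement (i) for $H$ and hence statement (ii). For statement (iii), if $G$ is second countable then so is its open subgroup $H$, so $A_p(H)$ is separable and Theorem \ref{unify}(iii) (equivalently Corollary \ref{corollary} together with Corollary \ref{lemma}, since $d(A_p(H))=\omega$) makes $A_p(H)$ enAr.

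Next I would transfer the construction to $G$. Let $\iota\colon A_p(H)\to A_p(G)$ be the isometric algebra homomorphism of (b). Its image $\mB:=\iota(A_p(H))$ is a closed subalgebra of $A_p(G)$, and because $\iota$ is an isometric isomorphism onto $\mB$ it inherits every relevant property: $\mB$ is non-unital, weakly sequentially complete, and the $\iota$-image of a bai of $A_p(H)$ is a bai of $\mB$. Applying Theorem \ref{unify} with $\mA=A_p(G)$ and this $\mB$ then gives a bounded linear surjection of $A_p(G)^*/\W(A_p(G))=PM_p(G)/\W(A_p(G))$ onto $\ell^\infty$, i.e. statement (i), and statement (ii) follows. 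For statement (iii), second countability of $G$ makes $A_p(G)$ separable, so $d(A_p(G))=\omega$ and Theorem \ref{unify}(iii) yields that $A_p(G)$ is enAr. Alternatively one can bypass Theorem \ref{unify} at this step and push the $\ell^1(\omega)$-base produced inside $A_p(H)$ — which approximates the upper and lower triangles built from the bai — directly through $\iota$ into Theorem \ref{ttr}, since an isometric homomorphism preserves both the $\ell^1$-base inequality and the triangle approximations $\|\iota(x_{\alpha\beta})-\iota(a_\alpha)\iota(b_\beta)\|=\|x_{\alpha\beta}-a_\alpha b_\beta\|\to 0$.

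\textbf{The main obstacle} I anticipate is justifying input (b): that for an open subgroup the extension-by-zero map is simultaneously an isometry $A_p(H)\to A_p(G)$ and an algebra homomorphism. Multiplicativity is immediate, since both products are pointwise and the extended functions vanish off $H$, but the isometric identification of the norms is Herz's restriction theorem and must be invoked carefully; it is precisely what allows the bai and the weak sequential completeness of $A_p(H)$ to survive the passage into $A_p(G)$. A secondary point requiring care is the non-unitality of $\mB$: this is where the reduction of the compact case to Theorem \ref{ArNotAir30} is essential, because Theorem \ref{unify} genuinely fails for unital algebras (a bai in a unital algebra converges in norm to the identity, and is therefore weakly Cauchy).
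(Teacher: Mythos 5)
Your proposal is correct and rests on the same two external inputs as the paper's proof (Leptin--Herz: amenability of $H$ gives $A_p(H)$ a bai; Herz: extension by zero embeds $A_p(H)$ as a closed subalgebra of $A_p(G)$ for $H$ open), but the transfer from $H$ to $G$ is organized differently. The paper applies Theorem \ref{unify} once, with $\mA=\mB=A_p(H)$, to get a surjection $PM_p(H)/\W(A_p(H))\to\ell^\infty$, and then transports the conclusion to $G$ by dualizing the embedding: $\mathcal{I}^\ast$ is onto and maps $\W(A_p(G))$ into $\W(A_p(H))$, hence induces a surjection $PM_p(G)/\W(A_p(G))\to PM_p(H)/\W(A_p(H))$ which is composed with the first map. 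You instead transport the \emph{hypotheses}: you apply Theorem \ref{unify} a second time with $\mA=A_p(G)$ and $\mB=\iota(A_p(H))$, which is exactly the subalgebra format that theorem was stated for; both routes are valid and of essentially equal length. The one genuine improvement in your write-up is the explicit reduction of the case of compact (hence non-discrete) $H$ to Theorem \ref{ArNotAir30}: Theorem \ref{unify} requires a \emph{non-unital} $\mB$, and $A_p(H)$ is unital precisely when $H$ is compact, a point the paper's proof passes over silently (its intended setting, per the preceding discussion, is $H$ discrete and infinite, where non-compactness is automatic). Your observation that non-unitality cannot be dropped — a bai in a unital algebra norm-converges to the unit and so is weakly Cauchy, killing the Rosenthal argument — is exactly right and justifies the case split.
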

\begin{proof}
Note first that the  trivial extension $u\mapsto  \overset{\circ}{u} $ where $\overset{\circ}{u}$ extends $u$ by making it 0 off $H$,  establishes a  Banach algebra embedding  $\mathcal{I} \colon A_p(H)\to A_p(G)$, \cite[Proposition 5]{herz}. Hence  $\mathcal{I}^\ast \colon PM_p(G)\to PM_p(H)$ is a surjective bounded linear  mapping. Since $\mathcal{I}$ is multiplicative, we have that $\mathcal{I}^\ast(\mW(A_p(G))\subseteq \mW(A_p(H))$ and, therefore, $\mathcal{I}^\ast $ induces a surjective bounded linear  mapping
 \[\widetilde{ \mathcal{I}}\colon \frac{PM_p(G)}{\mW(A_p(G))}\to \frac{PM_p(H)}{\mW(A_p(H))}.\]

Now, since $H$ is amenable, $A_p(H)$  has a bai (see \cite[Theorem 6]{herz}). And since it is assumed to be weakly  sequentially complete,  we obtain from  Theorem \ref{unify}
 a linear bounded map $\mathcal{T}_H$ from $PM_p(H)/\W(A_p(H))$ onto $\ell^\infty.$
 If $\mathcal{T}_H$  is composed with $\widetilde{ \mathcal{I}}$,
 a linear bounded map from $ PM_p(G)/\mW(A_p(G))$ onto $\ell^\infty$ is obtained.

The second statement is now obvious, and the last statement follows directly from Lemma \ref{hul} since $d(A_p(G))=w.$
\end{proof}

\begin{remark} \label{enA-no-sAr} In 2008, Losert announced   in his lectures
\cite{losert2} that $A(G)$ is not strongly Arens irregular (sAir) when $G$ is either the compact group
$SU(3)$ or the locally compact group $SL(2,\R)$.
By Theorem  \ref{ArNotAir2} (this also follows from \cite{hu}, \cite{granirer} or \cite{FGVN}) , $A(G)$ is enAr in each of these cases. So $A(G)$ is a Banach algebra which is enAr but not sAir for the compact group $SU(3)$ and the non-compact non-discrete group $Sl(2,\R).$

Recently, in \cite{losert17}, Losert also  proved that $A_2(G)=A(G)$ is not sAir when $G$ is
a discrete group containing the free group $\F_r$ with $r$ generators, where $r\ge 2$.
So with Theorem \ref{ArNotAir3}, we have another example, this time a discrete group, for which $A(G)$ is enAr but not sAir.
\end{remark}

\begin{corollary} \label{F2} The Fourier algebra $A(\F_r)$ is enAr but not sAir for every $r\ge2.$
\end{corollary}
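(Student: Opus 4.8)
The plan is to derive the two halves of the statement from results already in place. For the extreme non-Arens regularity I would apply Theorem \ref{ArNotAir3} with $p=2$ and $G=\F_r$. The first thing to do is exhibit an infinite amenable open subgroup $H\le \F_r$ with $A_2(H)$ weakly sequentially complete. Since $\F_r$ is discrete, every subgroup is open, so I would take $H=\langle a\rangle\cong \Z$, the infinite cyclic subgroup generated by one of the free generators. This $H$ is infinite and abelian, hence amenable. Because $p=2$, the relevant algebra is $A_2(H)=A(H)$, the Fourier algebra of $H$, and $A(H)$ is the predual of the group von Neumann algebra $VN(H)$; as a predual of a von Neumann algebra it is weakly sequentially complete. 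Thus all the standing hypotheses of Theorem \ref{ArNotAir3} are satisfied.

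Next I would check the second-countability hypothesis required for the enAr conclusion in Theorem \ref{ArNotAir3}(iii). For finite $r$ the free group $\F_r$ is countable, and a discrete group is second countable exactly when it is countable; hence $\F_r$ is second countable. Theorem \ref{ArNotAir3}(iii) then yields directly that $A_2(\F_r)=A(\F_r)$ is enAr, which is the first assertion of the corollary.

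For the failure of strong Arens irregularity I would invoke Losert's theorem recorded in Remark \ref{enA-no-sAr}, namely that $A(G)$ is not sAir whenever $G$ is a discrete group containing a copy of $\F_r$ with $r\ge 2$. Since $\F_r$ is itself a discrete group containing $\F_r$, this applies verbatim and shows that $A(\F_r)$ is not sAir. Combining the two conclusions gives the corollary.

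The argument is essentially a matter of verifying hypotheses, so there is no genuinely hard step; the only points demanding care are the two standard facts that $A(\Z)$ is weakly sequentially complete (as the predual of a von Neumann algebra) and that a countable discrete group is second countable. Of these, confirming weak sequential completeness of the Fourier algebra of the chosen amenable subgroup is the crux, and for $p=2$ it is automatic — which is precisely why the corollary is stated for the Fourier algebra $A=A_2$ rather than for a general $A_p$.
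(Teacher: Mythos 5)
Your argument is correct and coincides with the one the paper intends: the enAr half follows from Theorem \ref{ArNotAir3}(iii) applied with $p=2$ and the infinite amenable (open, since $\F_r$ is discrete) subgroup $H\cong\Z$, whose Fourier algebra $A(\Z)$ is weakly sequentially complete as the predual of $VN(\Z)$, while the non-sAir half is exactly Losert's result quoted in Remark \ref{enA-no-sAr}. Your verification of the hypotheses (countability of $\F_r$ giving second countability, and the choice of $H$) is precisely the routine checking the paper leaves implicit.
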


%\end{document}

\textbf{Acknowledgement:} The authors are indebted to the referee for the very careful reading of the paper.
Particular thanks are for the corrections made in the proofs of Theorems \ref{ttr} and \ref{ArNotAir2}.

Parts of the article were written while the first named author was visiting IMAC at Universitat Jaume I in December 2018 and June 2019. Support and hospitality are gratefully acknowledged.
\bibliographystyle{plain}

\end{document}